\newcommand{\E}{\mathbb E}
\newcommand{\N}{\mathbb N}
\renewcommand{\P}{\mathbb P}
\newcommand{\R}{\mathbb R}
\newcommand{\Z}{\mathbb Z}
\newcommand{\half}{\mbox{$\frac 1 2$}}
\renewcommand{\div}{\operatorname{div}}
\newcommand{\grad}{\operatorname{grad}}
\newcommand{\1}{\mathbbm 1}
\newcommand{\im}{\operatorname{im}}
\newcommand{\deRham}{\operatorname{deRham}}
\newtheorem{theorem}{Theorem}[section]
\newtheorem{lemma}[theorem]{Lemma}
\newtheorem{proposition}[theorem]{Proposition}
\newtheorem{corollary}[theorem]{Corollary}
\theoremstyle{definition}
\newtheorem{definition}[theorem]{Definition}
\newtheorem{problem}[theorem]{Problem}
\theoremstyle{remark}
\newtheorem{remark}[theorem]{Remark}
\newtheorem{example}[theorem]{Example}
\begin{document}

\title[Ergodic control on compact manifolds]{Linear PDEs and eigenvalue problems corresponding to ergodic stochastic optimization problems on compact manifolds}
\author{Joris Bierkens}
\address{J. Bierkens \\
University of Warwick \\
Department of Statistics \\
Coventry, CV4 7AL \\
UK}
\email{j.bierkens@warwick.ac.uk}
\thanks{The research at Radboud University by J. Bierkens and H. J. Kappen has received funding from the European Community's Seventh Framework Programme (FP7/2007-2013) under grant agreement no. 270327 (CompLACS).}

\thanks{The research at the University of Warwick by J. Bierkens has received support from the EPSRC under the CRiSM grant: EP/D002060/1.}

\author{Vladimir Y. Chernyak}
\address{V. Y. Chernyak \\ Wayne State University \\ Department of Chemistry \\ Detroit (MI), USA}
\email{chernyak@chem.wayne.edu}
\thanks{The research at Wayne State University (MI), USA, has received support from the NSF under grant agreement no. CHE-1111350.}

\author{Michael Chertkov}
\address{M. Chertkov \\ Los Alamos National Laboratory \\ Center for Nonlinear Studies \\ Los Alamos (NM), USA}
\email{chertkov@lanl.gov}
\thanks{The work at LANL was carried out under the auspices of the National Nuclear Security Administration of the U.S. Department of Energy at Los Alamos National Laboratory under Contract
No. DE-AC52-06NA25396.}

\author{Hilbert J. Kappen}
\address{H. J. Kappen \\ Radboud University \\ Faculty of Science \\ Nijmegen \\ The Netherlands}
\email{b.kappen@science.ru.nl}

\begin{abstract}
Long term average or `ergodic' optimal control problems on a compact manifold are considered. The problems exhibit a special structure which is typical of control problems related to large deviations theory: Control is exerted in all directions and the control costs are proportional to the square of the norm of the control field with respect to the metric induced by the noise.
The long term stochastic dynamics on the manifold will be completely characterized by the long term density $\rho$ and the long term current density $J$. As such, control problems may be reformulated as variational problems over $\rho$ and $J$. The density $\rho$ is paired in the cost functional with a state dependent cost function $V$, and the current density $J$ is paired with a vector potential or gauge field $A$.
We discuss several optimization problems: the problem in which both $\rho$ and $J$ are varied freely, the problem in which $\rho$ is fixed and the one in which $J$ is fixed. These problems lead to different kinds of operator problems: linear PDEs in the first two cases and a nonlinear PDE in the latter case. These results are obtained through a variational principle using infinite dimensional Lagrange multipliers.
In the case where the initial dynamics are reversible the optimally controlled diffusion is also reversible. The particular case of constraining the dynamics to be reversible of the optimally controlled process leads to a linear eigenvalue problem for the square root of the density process.
\end{abstract}

\maketitle

\keywords{Key words and phrases: Stochastic optimal control, ergodic theory, calculus of variations, differential geometry, flux, current, gauge invariance}

\subjclass{AMS Subject Classification (2010): Primary 49K20; Secondary 93E20, 58A25}


\section{Introduction}

In this paper we discuss stochastic, long term average optimal, or `ergodic' control problems on compact orientable manifolds, in which control is exerted in all directions, and where the control costs are proportional to the square of the norm of the control field with respect to the metric induced by the noise. As such, our emphasis is not on the solution of applied control problems. However this setting has strong connections with the general theory of large deviations of ergodic Markov processes \cite{DonskerVaradhan1975, Varadhan1984}. We place special emphasis on the characterization of control solutions in terms of density and current, in relation to the so called Level 2.5 large deviations theory \cite{Chernyak2009, BaratoChetrite2015, Bertini2015}, discussed in more detail below.

The general theory of ergodic control in continuous spaces has been developed rigorously relatively recently; see works by Borkar and Gosh (e.g. \cite{BorkarGhosh1988}) and the recent monograph \cite{Arapostathis2012}. The special case of compact manifolds has been extensively studied in relation to the theory of large deviations. A brief historic overview of this field will be provided in this introduction (Section~\ref{sec:historic-overview}).

The `squared control cost' case is further motivated by recent attention to stochastic optimal control for finite time horizon problems with relative entropy determining control cost \cite{Kappen2005}. Typically the solution of stochastic optimal control problems can be rephrased as the solution of a non-linear partial differential equation called the Hamilton-Jacobi-Bellman equation \cite{FlemingRishel1975, FlemingSoner2009}. If the control cost can be interpreted as a relative entropy (in continuous settings, problems with squared control cost) this often yields elegant simplifications of the non-linear Hamilton-Jacobi-Bellman (HJB) equation through an exponential transform, known as the Cole-Hopf transform in fluid dynamics \cite{Hopf1950}. In the finite time horizon case the HJB equation is transformed into a linear equation, see e.g. \cite{Kappen2005, BierkensKappen2012b}. In the ergodic setting it leads typically to operator eigenvalue problems. 

The reader interested in the statistical physics interpretation of this material is referred to the related publication by the same authors~\cite{Chernyak2013} which provides a brief overview in physical terms of some of the main results in this paper, including the expression of ergodic behaviour in terms of current and density. The current paper can be seen as a more detailed and precise mathematical exposition of these results, and at some appropriate places we will point to related discussions in~\cite{Chernyak2013}. Perhaps the main achievement of this paper, in comparison with \cite{Chernyak2013}, is the precise mathematical characterization of the necessary and sufficiency conditions for optimality for the optimal control problem formulated over density and current (Section~\ref{sec:hjb}). Also we have a more extensive description of the related optimization problems in which one of the variables (current or density) is held fixed (Sections~\ref{sec:fixed_density},~\ref{sec:fixed_current_density}).

\subsection{Historical overview of connections between stochastic control theory and the theory of large deviations}
\label{sec:historic-overview}
Let us first remark that we will take some care to rephrase the historical results using the same notation as the one which is used in subsequent sections of this paper. As a basic introductions to the theory large deviations we recommend \cite{Varadhan1984, Hollander2000}.

In \cite{Kac1951} the classical expression 
\begin{equation} \label{eq:kac-expression} \lambda^{\star} = -\lim_{t \rightarrow \infty} \frac 1 t \log \E \left[ \exp \left( \int_0^t V(X_s) \ d s \right) \right]\end{equation}
(with expectations over Brownian sample paths)
was obtained for the principal eigenvalue $\lambda^{\star}$ of the eigenvalue problem
\begin{equation} \label{eq:kac-pde} \half \psi'' - V \psi = \lambda \psi,\end{equation}
where $V : \R \rightarrow [0,\infty)$.

As is well known, the representation~\eqref{eq:kac-expression} has a direct connection to the theory of large deviations. Specifically, for real random variables $Y_n$ a \emph{large deviation principle (LDP)} is said to hold with \emph{rate function} $I$ if 
\begin{equation}
 \P \left( Y_n \in A \right) \approx \exp\left( - n \inf_{y \in A} I(y) \right),
\end{equation}
in which case the \emph{rate function} $I$ satisfies, by Varadhan's Lemma (or in physics nomenclature, the Laplace method),
\[ \lim_{n \rightarrow \infty} \frac 1 n \log \E \left[ \exp \left( n F(Y_n) \right) \right] = \sup_{y \in \R} [F(y) - I(y)].\]
A converse result known as the G\"artner-Ellis theorem allows one, under certain conditions, to deduce an LDP from the Legendre transform of the large $n$ limit of the exponential expectations $\frac 1 n \log \E \left[ \exp \left( n F(Y_n) \right) \right]$. Bryc's formula \cite{Bryc1990} is an extension to infinite dimensions of this result.

In our case of interest, taking $Y_n = \frac 1 n \int_0^n V(X_s) \ d s$ and $F(y) = y$, the connection between~\eqref{eq:kac-expression} and the theory of large deviations becomes clear. At a higher level of abstraction, one could let $Y_n$ assume values in the space of probability measures, and let $Y_n$ denote the empirical distribution of $(X_s)$, i.e. $Y_n(A) = \frac 1 n \int_0^n \1_{\{X_s \in A\}} \ d s$. In this case the functional $F$ would be $F(\mu) := \int V \ d \mu$. Such a large deviation principle on the level of empirical distributions is called a \emph{Level 2 LDP}. At an even higher level of abstraction, the \emph{Level 3 LDP} concerns the asymptotic behaviour of the empirical process, i.e. the empirical distributions of finite sequences $(Y_1, \dots Y_n)$.

Recently there has been an emergence of interest in the so called \emph{Level 2.5 LDP}, which concerns the empirical distribution of a Markov sequence of random variables along with the empirical distribution of its current or flow. See e.g. \cite{Chernyak2009, Chernyak2013} for relations with non-equilibrium statistical mechanics, and \cite{BaratoChetrite2015, Bertini2015} for large deviations of currents in Markov processes. It is the aim of this paper to contribute to the Level 2.5 large deviation theory of diffusion processes in connection with a) the theory of stochastic control and b) certain partial differential equations and eigenvalue problems.

We hope that the above discussion has clarified to the reader the strong connection between an expression of the form~\eqref{eq:kac-expression} and the theory of large deviations, and will now sketch the historical developments concerning the principal eigenvalue in relation to stochastic control and large deviations below.

The seminal papers \cite{DonskerVaradhan1975-I, DonskerVaradhan1975} extended~\eqref{eq:kac-expression} to general Markov processes on compact metric spaces, in the following sense. Let $L$ be the infinitesemal generator of a Feller-Markov process in a compact metric space $E$ and let $V : E \rightarrow \R$ be a continuous mapping.
In \cite{DonskerVaradhan1975-I} a variational characterization of the principal eigenvalue $\lambda^{\star}$ of the operator $L - V$ is established as
\begin{equation} \label{eq:donsker-varadhan-1} \lambda^{\star} = - \inf_{\mu \in \mathcal M} \left[ \int_E V \ d \mu + I(\mu) \right],\end{equation} where $\mu$ is the space of probability measures on $E$ and $I$ is defined by
\begin{equation} \label{eq:donsker-varadhan-rate-function} I(\mu) = - \inf_{\substack{\psi \in D(L) \\ \psi > 0 }} \int_E \left( \frac{L \psi}{\psi} \right) \ d \mu.\end{equation}
The function $I$ is the rate function describing the large deviations of the empirical distribution from the invariant probability distribution.
In \cite{DonskerVaradhan1975} the relation to exponential expectations is obtained,
\begin{equation} \label{eq:donsker-varadhan-2} \lim_{t \rightarrow \infty} \frac 1 t \log \E \left[ \exp \left( - t F (L_t) \right) \right] = -\inf_{\mu \in \mathcal M} [ F(\mu) + I(\mu)],\end{equation}
with
\[ L_t(A) = \frac 1 t \int_0^t \1_{A} (X_s) \ d s,\] i.e. $L_t(A)$ denotes the proportion of time up to time $t$ that a sample path spends in $A$, and $F : \mathcal M \rightarrow \R$. Taking $F (\mu) := \int_E V \ d \mu$ formally recovers~\eqref{eq:kac-expression} in the special case of a Brownian motion.
An extension to non-compact spaces is given in \cite{DonskerVaradhan1976-III} but it seems that the stated conditions are hard to check in practice; see also \cite{Varadhan1984}.
In \cite{Gartner1977} an alternative characterization of the Donsker-Varadhan rate function for non-degenerate diffusions on compact manifolds was obtained independently, given by
\begin{equation} \label{eq:gartner} I(\mu) = \half \int_E \| \nabla \Phi \|^2 \ d \mu,\end{equation}
for $\mu$ having density $\rho$ with respect to Riemann volume, and where $\Phi$ is the unique (up to a constant) solution of 
\[ \Delta \Phi + \frac 1 \rho \langle \nabla \rho, \nabla \Phi \rangle = \frac 1 \rho L^{\star} \rho,\]
with $L^{\star}$ the adjoint of $L$.
In \cite{Holland1978} a similar representation for the principal eigenvalue of a diffusion generator is obtained, and in this paper the relation to stochastic control theory seems to be discussed for the first time. The connection with the control theoretic formulation is explored more extensively for the one-dimensional Brownian case in \cite{Karatzas1980}. It is noted that a change of variables $\psi = \exp(\Psi)$ in~\eqref{eq:kac-pde} yields
\[\lambda^{\star} = \half \frac{d^2 \Psi}{d x^2} + \min_u \left( u \frac{d \Psi}{d x} + \half u^2  \right) - V(x),\]
with the minimum being attained in $u^{\star} = -\frac{d \Psi}{d x}$.
The above equation can be recognized as the Hamilton-Jacobi-Bellman equation \cite{FlemingRishel1975, FlemingSoner2009} corresponding to a long term average cost problem with dynamics
\begin{equation} \label{eq:controlled-brownian-motion} d X_t^u = u_t(X^u) \ d t + d W_t, \quad X_0 = x, \end{equation}
where $W$ is a standard Brownian motion and where $u_t(X_t^u)$ is a control depending on $t \in [0,\infty)$ and the path of $X_t^u$ up to time $t$. The cost function of the associated control problem is 
\[ \mathcal C(u, x) = \lim_{T \rightarrow \infty} \frac 1 T \E_x \left[ \int_0^T V(X_t) + \half u_t(X)^2 \ d t \right].\]
Also in \cite{Karatzas1980} it is discussed how the principal eigenvalue $\lambda^{\star}$ admits the representation 
\[ \lambda^{\star} = -\lim_{T \rightarrow \infty} \frac{\Psi(x;T)}{T},\]
where $\Psi(\cdot;T)$ is the value of the optimal control problem with finite time horizon $T$, with dynamics~\eqref{eq:controlled-brownian-motion} and cost functional 
\[ \mathcal C(x,T; u) = \E_x^u \int_0^{T} \left\{ V(X_t^u) + \half U_t^2(X^u) \right\} \ d t.\]
Subsequently Fleming, Sheu and Soner clarified the relation between stochastic control and the principal eigenvalue for general Markov operators \cite{Fleming1982, Sheu1984}, and provided an alternative proof of Donsker and Varadhan's result on the relation of the principal eigenvalue to the exponential expectation~\eqref{eq:donsker-varadhan-2} in \cite{FlemingSheuSoner1987}; see also the book chapter \cite{Fleming1985}.

The connections between the theory of stochastic optimal control and large deviation theory extend beyond the ergodic setting which is considered in this paper. 
For large deviations theory of Markov chains in continuous time see \cite{Hollander2000, Fortelle2001}.
A modern research monograph on the connection between stochastic optimal control and large deviation theory is provided by \cite{Feng2006}. 
A different more commonly encountered stochastic control problem over an infinite time horizon is the control problem with discounted cost \cite[Section III.9]{FlemingSoner2009}. The ergodic control problem can be thought of as the limiting case where the discount factor approaches zero \cite{Arapostathis2012}. For applied control theoretic papers see also e.g. \cite{Rutquist2008} for the diffusion case and \cite{Todorov2006} for the Markov chain setting.

\subsection{Vector potentials and current density}

On a compact manifold, a few phenomena play a special role. The most important aspect of this setting is that transient behaviour cannot occur. Therefore, an invariant measure is necessarily unique and ergodicity follows immediately. The long term stochastic dynamics on the manifold will be completely characterized by the long term (particle) density $\rho$ and the long term current density $J$ (see Section~\ref{sec:ergodic_reformulation}). As such, control problems may be reformulated as variational problems over $\rho$ and $J$. We will consider a cost functional where the density $\rho$ is paired with the \emph{scalar cost} or \emph{(scalar) potential}  $V$, and the current density $J$ is paired with a \emph{vector potential} or \emph{gauge field} $A$. 

It has been well established long ago in quantum field theory that the vector potential $A$ plays the role of the variable conjugate to the current density $J$, just as the scalar potential $V$ is related to the charge density $\rho$. Conservation of current, also known as the continuity equation, can then be viewed as a dual formulation of gauge invariance. This duality is not specific to the quantum world and can be applied to currents in a more general setting, e.g., for stochastic currents \cite{Chernyak2009, BaratoChetrite2015}. In the case of stochastic processes adding the current density variables to the more customary particle density variable is often referred to as the $2.5$ level of theory \cite{BaratoChetrite2015}, and has shown several advantages. 

Besides the obvious benefit of an ability to efficiently treat observables depending on current, such as performed work, generated heat or entropy, as well as fluxes (see Section~\ref{sec:flux} and Appendix~\ref{app:flux}), the combination of scalar density and current density turns out to constitute the `right' set of variables in the sense that the large deviations of these quantities can be identified explicitly both in the continuous and discrete settings (such as stochastic dynamics on graphs). Even without the inclusion of a vector potential (i.e. taking $A = 0$), the formulation in terms of current and density provides a new and clear perspective on the optimal control problem in relation to the large deviations theory. The corresponding rate function is known in the physics literature as the current-density functional. Computation of the large deviations of the generalized observables can be reduced to solving an optimization problem, similar in spirit to the discussion in Section~\ref{sec:historic-overview} for the scalar potential above. 

In this manuscript we demonstrate that the gauge invariant approach, which has shown its capability in the context of stochastic processes (as briefly discussed above) can be extended to the stochastic optimal control setting while maintaining the same advantages of (i) considering optimal control of current density, in addition to classical particle density, which is achieved by introducing the gauge invariant extension of the standard Bellman equation, and (ii) formulating the gauge invariant Bellman equation as a solution of an optimization problem that involves a functional of current and particle densities. In other words, the approach outlined in this manuscript can be viewed as building the $2.5$ level stochastic optimal control theory.

\subsection{Outline and aims of this paper}

In Section~\ref{sec:problemsetting} the long term average stochastic optimal control problem for a non-degenerate diffusion over a compact manifold is formulated. In Section~\ref{sec:ergodic_reformulation} some preliminary operations, mostly based upon the ergodic properties of a compact diffusion, are performed which allow us to remove all reference to probability theory from the problem formulation, resulting in a simplified optimization problem.

We then obtain conditions related to optimality for the formulated ergodic optimization problem. In Section~\ref{sec:hjb} the problem in which both density $\rho$ and current density $J$ are varied freely is solved, in the sense that necessary as well as sufficient conditions for optimality are obtained. The sufficient conditions are obtained by considering the dual problem (Section \ref{sec:sufficient}) and the necessary conditions are derived using the method of Lagrange multipliers (Section \ref{sec:necessary}). These conditions can be phrased as the solution of a linear eigenvalue problem. As a side result we derive a variational formula for the principal eigenvalue of an elliptic operator.

Then we consider two further, closely related optimization problems. In Section~\ref{sec:fixed_density} the optimization problem in which $J$ is varied for fixed density $\rho$ is discussed. It turns out that a necessary condition for optimality can be phrased as a linear elliptic PDE. In Section~\ref{sec:fixed_current_density} we obtain necessary conditions for optimality of the optimization problem for fixed current density $J$ and varying $\rho$. Here the necessary condition for optimality can be formulated as a non-linear eigenvalue problem. In the reversible case this reduces to a linear eigenvalue problem. 

In the case where the initial dynamics are reversible we obtain the result that the optimally controlled diffusion is also reversible (Section~\ref{sec:reversible_solution}). The particular case of insisting $J = 0$ coincides with demanding reversible dynamics of the optimally controlled process. Interestingly, this optimization problem leads to a linear eigenvalue problem for the square root of the density process, just as we see in quantum mechanics (but note that our setting is entirely classical). We conclude this paper with a brief discussion (Section~\ref{sec:discussion}). In the appendices a detailed discussion of the use of a vector potential to quantify flux is provided (Appendix~\ref{app:flux}), as well as a derivation of the relation between long term average drift, density, and current density (Appendix~\ref{app:long_term_current}).


\subsection{Notation}
When $(\Omega, \mathcal F, \P)$ is a probability space and $H(\omega, x_1, x_2, \dots)$ is a measurable function of $\omega \in \Omega$ and other variables $x_1, x_2, \dots$, we often omit the dependence on $\omega$, making $H(x_1, x_2, \dots)$ into a random quantity which also depends on $x_1, x_2, \dots$.

Throughout the paper we will work on a smooth orientable Riemannian manifold $(M,g)$ and use similar notation as may be found e.g. in \cite[Chapter V]{IkedaWatanabe1989} or \cite{Warner1983}. By smooth we mean infinitely often differentiable, unless stated otherwise. As usual we employ the Einstein summation convention, i.e. in local coordinates summation over one upper and one lower index is automatic, e.g. $g^{ij} f_j = \sum_j g^{ij} f_j$.
$C^{\infty}(M)$ denotes the space of smooth functions from $M$ into $\R$, $C^k(M)$ the space of $k$ times continuously differentiable functions (where $k = 0, 1, \dots$), $\mathfrak X(M)$ denotes the space of smooth vectorfields on $M$, and $\Lambda^p(M)$ denotes the space of smooth differential forms of order $p$ on $M$, for $p = 0, 1, \hdots, n$. The volume form is denoted $d x$, in local coordinates $d x = \sqrt{\det g_{ij}(x)} d x^1 \dots d x^n$. The Riemannian metric induces a local inner product $\langle \cdot, \cdot \rangle$ and corresponding norm $\|\cdot \|$ on tensors of arbitrary covariant and contravariant orders. E.g.  if $S^{i_1, \dots, i_p}_{j_1, \dots, j_q}$ and $T^{i_1, \dots, i_p}_{j_1, \dots j_q}$ are $(p,q)$-tensors, we have 
\[ \langle S, T \rangle = g_{i_1 i_1'} \dots g_{i_p i'_p} g^{j_1 j'_1} \dots g^{j_q j'_q} S^{i_1, \dots, i_p}_{j_1, \dots, j_q} T^{i'_1, \dots, i'_p}_{j'_1, \dots j'_q}\]
and $\| T \| = \sqrt{\langle T, T\rangle}$.
We employ the usual notions of exterior derivative $d : \Lambda^{p-1}(M) \rightarrow \Lambda^p(M)$, which as  $L^2(M,g)$ adjoint $\delta : \Lambda^p(M) \rightarrow \Lambda^{p-1}(M)$, i.e. for  differential forms $\alpha \in \Lambda^p$ and $\beta \in \Lambda^{p-1}$, we have
\[ \int_M \langle \alpha, d \beta \rangle \ d x = \int_M \langle \delta \alpha, \beta \rangle \ d x.\]
Partial derivatives in local coordinates are denoted by $\partial_i = \frac{\partial}{\partial x^i}$. If $\xi \in \mathfrak X(M)$ and $f \in C^{\infty}(M)$ then in local coordinates $(\xi f) (x) = \xi^i \partial_i f(x)$.
The symbol $\nabla$ is the covariant derivative corresponding to the Levi-Civita connection of the Riemannian metric $g$. The Christoffel symbols corresponding to the Levi-Civita connection are denoted by $\Gamma^{i}_{jk}$.
The gradient of a function $f \in C^{\infty}(M)$ is the vector field $\grad f = \nabla f$ with components $g^{ij} \partial_j f$. The divergence of a vectorfield $\xi \in \mathfrak X(M)$ is, in local coordinates, $\div \xi = \frac 1 { \sqrt{ \det G}} \partial_i(\xi^i \sqrt{\det G})$, where $G = (g_{kl})$, and satisfies $\div \xi = - \delta \alpha$, where $\alpha \in \Lambda^1(M)$ is given by $\alpha_i = g_{ij} \xi^j$.
On $\Lambda^p(M)$ an inner product is defined by $\langle \alpha, \beta \rangle_{\Lambda^p(M)} = \int_M \langle \alpha, \beta \rangle \ d x$, where $d x$ denotes the volume form corresponding to $g$. The inner product $\langle \cdot,\cdot\rangle_{\Lambda^0(M)}$ is also denoted by $\langle \cdot, \cdot \rangle_{L^2(M)}$. Let $L^2(M, g) = L^2(M)$ denote the usual Hilbert space obtained by completing $C^{\infty}(M)$ with respect to the $L^2(M)$-inner product and considering, where appropriate, equivalence classes of functions. The Hodge star operator is denoted by $\star : \Lambda^p(M) \rightarrow \Lambda^{n-p}(M)$, $p = 0, \hdots, n$.

\section{Problem setting}
\label{sec:problemsetting}
Throughout this paper let $(M, g)$ denote a smooth compact connected oriented $n$-dimensional Riemannian manifold. 
Let $(\Omega, \mathcal F, (\mathcal F_t), \P)$ denote a filtered probability space on which is defined a $d$-dimensional standard Brownian motion. Consider a stochastic process $X$ defined on $M$ by the SDE, given in local coordinates by
\begin{equation} \label{eq:uncontrolled} 
d X_t = b(X_t) + \sum_{\alpha = 1}^d \sigma_{\alpha}  \circ d B^{\alpha}_t, \quad t \geq 0,
\end{equation}
or, in local coordinates,
\[ d X^i_t =  b^i(X_t) \ d t + \sum_{\alpha = 1}^d \sigma^i_{\alpha}(X_t) \circ d B^{\alpha}_t, \quad t \geq 0,\]
where, for $\alpha = 1, \hdots, d$, $\sigma_{\alpha} \in \mathfrak X(M)$, and it is assumed without loss of generality that the noise vectorfields $\sigma_{\alpha}$ are related to the Riemannian metric through the relation 
\[ g^{ij} = \sum_{\alpha=1}^d \sigma^i_{\alpha} \sigma^j_{\alpha}, \quad i, j = 1, \hdots, n.\]
The notation $\circ \, d B^{\alpha}$ indicates that we take Stratonovich integrals with respect to the Brownian motion. 
One can think of $b$ as a force field, resulting from a potential, some external influence, or a combination of both.


The SDE~\eqref{eq:uncontrolled} is referred to as the \emph{uncontrolled dynamics}. 
These dynamics may be altered by exterting `control' vectorfield $u \in \mathfrak X(M)$ in the following way, 
\begin{equation} \label{eq:controlled} d X_t = \left[ b(X_t) + u(X_t) \right] \ d t + \sum_{\alpha = 1}^d \sigma_{\alpha}(X_t) \circ d B^{\alpha}_t,  \quad t \geq 0. \end{equation}
For any initial condition $X_0 = x_0 \in M$ and control vectorfield $u \in \mathfrak X(M)$, a unique solution to~\eqref{eq:controlled} exists \cite[Chapter V]{IkedaWatanabe1989} and will be denoted by $X^{x_0, u}$.
The SDE~\eqref{eq:controlled} is referred to as the \emph{controlled dynamics}. 

Consider the random functional $\mathcal C : \Omega \times M \times \mathfrak X(M) \rightarrow \R$ denoting pathwise long term average cost,
\begin{equation} \label{eq:costfunction}
 \mathcal C(\omega, x_0, u) := \limsup_{T \rightarrow \infty} \frac 1 T \left[ \int_0^T V(X_s^{x_0,u}) + \frac 1 2 \| u(X_s^{x_0,u})\|^2 \ d s + \int_0^T \langle A(X_s^{x_0,u}),  \circ d X_s^{x_0,u} \rangle \right]
\end{equation}
where $V \in C^{\infty}(M)$ is a \emph{potential} or \emph{state dependent cost function}, $\|u(\cdot)\|^2$ represents the \emph{(instantaneous) control cost} corresponding to a control vectorfield $u \in \mathfrak X(M)$, and $A \in \mathfrak X(M)$. The final term in~\eqref{eq:controlled} is of course shorthand notation for
$\int_0^T [g_{ij} A^i](X_s^{x_0,u})  \circ d (X_s^{x_{0},u})^j$ and may represent a \emph{flux}, as explained in Section~\ref{sec:flux} and more extensively in Appendix~\ref{app:flux}. The vectorfield $A$ is often called a \emph{vector potential} or \emph{gauge field} in physics.

\begin{remark}
\label{rem:A_actually_diff_form}
From a physics perspective, it would be better to let the vector potential $A$ take the form of a differential form. On a mathematical level this distinction is irrelevant and we choose $A$ to be a vectorfield for notational convenience, unless stated otherwise.
\end{remark}

\begin{remark}
The notation `$\limsup$' in~\eqref{eq:costfunction} is used to avoid any discussion at this point about the existence of the limit. Instead of the \emph{pathwise formulation} in~\eqref{eq:costfunction}, we could alternatively consider the weaker \emph{average formulation}, in which case the cost function would be the long term average of the expectation value $\E^{x_0,u}$ of the integrand in~\eqref{eq:costfunction}. We will see in Section~\ref{sec:ergodic_reformulation} that the limit of~\eqref{eq:costfunction} exists (and not just the `$\limsup$'). Furthermore this limit will turn out to be equal to a deterministic quantity (after excluding a a set of measure zero with respect to the invariant distribution), so that the pathwise formulation and the average formulation may be considered equivalent.
\end{remark}

It is the main aim of this paper to consider the following problem. 

\begin{problem}
\label{prob:original_formulation}
For every $x_0 \in M$, find a control vectorfield $\hat u \in \mathfrak X(M)$ such that
\[ \mathcal C(x_0, \hat u) = \inf_{u \in \mathfrak X(M)} \mathcal C(x_0, u), \quad \mbox{almost surely.}\]
\end{problem}

In Sections~\ref{sec:fixed_density} and \ref{sec:fixed_current_density} we will also discuss other variants of the control problem, where we will respectively fix the invariant density and the current density, which will be defined in Section~\ref{sec:ergodic_reformulation}.

\section{Ergodic reformulation of the optimization problem}
\label{sec:ergodic_reformulation}
In this section we will derive two equivalent formulations of Problem~\ref{prob:original_formulation}. These reformulations, Problem~\ref{prob:ergodic_minimization} and Problem~\ref{prob:ergodic_minimization_with_current} below, are better suited to the analysis in the remaining sections. Also some notation will be established that will be used throughout this paper.

Let $\Omega^X = C([0,\infty);M)$ denote the space of sample paths of solutions to~\eqref{eq:controlled}. 
We equip $\Omega^X$ with the $\sigma$-algebra $\mathcal F^X$ and filtration $(\mathcal F_t^X)_{t \geq 0}$ generated by the cylinder sets of $X$. Furthermore let probability measures $\P^{x_0,u}$ on $\Omega^X$ be defined as the law of $X^{x_0, u}$, for all $x_0 \in M$ and $u \in \mathfrak X(M)$. For every  $u \in \mathfrak X(M)$ the collection of probability measures $\P^{\cdot, u}$ defines a Markov process on $\Omega^X$, i.e. for every $x_0, x_1 \in M$, 
\[ \P^{x_0,u} \left((X(t_1 + s), \dots, X(t_k + s)) \in F) \mid \mathcal F_s^X \right) = \P^{X(s),u} \left(( X(t_1), \dots, X(t_k)) \in F \right).\]
For the moment let $u \in \mathfrak X(M)$ be fixed.
By~\cite[Theorem V.1.2]{IkedaWatanabe1989}, the Markov generator corresponding to~\eqref{eq:controlled} is given by
\[ L_u f(x) = \half \sum_{\alpha = 1}^d \sigma_{\alpha} \sigma_{\alpha} f(x) + (b + u) f(x).\]


\begin{lemma}
$L_u$ may be written as
\[ L_u f = \half \Delta f + (\widetilde b + u) f, \quad f \in C^2(M),\]
where $\widetilde b := b +  \sum_{\alpha = 1}^d \nabla_{\sigma_{\alpha}} \sigma_{\alpha}$.
The adjoint of $L_u$ with respect to the $L^2(M)$ inner product is given by
\[ L_u^{\star} \rho = \half \Delta \rho - \div \left( \rho (\widetilde b + u)  \right), \quad \rho \in C^2(M).\]
 \end{lemma}

\begin{proof}
The Laplace-Beltrami operator may be expressed as (see \cite[p. 285, eqn. (4.32)]{IkedaWatanabe1989})
\begin{equation}
 \label{eq:laplace_beltrami} \Delta f = g^{ij} \partial_i \partial_j f - g^{ij} \Gamma^{k}_{ij} \partial_k f.
\end{equation}
Using this expression, we compute
\begin{align*} \sum_{\alpha = 1}^d \sigma_{\alpha} \sigma_{\alpha} f & = \sum_{\alpha=1}^d \sigma_{\alpha}^i \partial_i \left( \sigma_{\alpha}^j \partial_j f \right) = \sum_{\alpha=1}^d \left( \sigma_{\alpha}^i \sigma_{\alpha}^j \partial_i \partial_j f + \sigma_{\alpha}^i \left( \partial_i \sigma_{\alpha}^j \right) \left( \partial_j  f\right) \right) \\
& = \sum_{\alpha=1}^d\left( g^{ij} \partial_i \partial_j f + \sigma_{\alpha}^i \left( \partial_i \sigma_{\alpha}^j \right) \left( \partial_j  f\right) \right)  = \Delta f + g^{ij} \Gamma^k_{ij} \partial_k f + \sum_{\alpha=1}^d \sigma_{\alpha}^i \left( \partial_i \sigma_{\alpha}^j \right) \partial_j f \\ &  = \Delta f + \sum_{\alpha=1}^d \left(\nabla_{\sigma_{\alpha}} \sigma_{\alpha}\right) f,
\end{align*}
where the last equality is a result of the definition of the Levi-Civita connection and the corresponding Christoffel symbols. The expression for $L_u^{\star}$ is immediate from its definition.
\end{proof}

In the remainder of this work, we will assume that all advection terms are absorbed in the drift $b$ so that we may omit the tilde in $\widetilde b$. This can alternatively be interpreted as assuming $\sum_{\alpha=1}^d \nabla_{\sigma_{\alpha}} \sigma_{\alpha} = 0$. This is further equivalent to demanding that $L_u = \half \Delta + b + u$. This assumption is without loss of generality (on the level of the probability law on trajectories of $X$) by the above lemma and the fact that the Markov generator $L^u$ uniquely determines the law of the trajectories of $X$.
%

\begin{lemma}
Let $x_0 \in M$ and $u \in \mathfrak X(M)$. The expectation of the trajectory of $X$ over the vector potential may be expressed as
\label{lem:expectation_gaugefield}
\[ \E^{x_0,u} \int_0^T \langle A(X_t), \circ d X_t  \rangle = \E^{x_0,u} \int_0^T \left[ \langle A, (b + u) \rangle + \half \div A \right](X_t) \ d t.\]
\end{lemma}

\begin{proof}
Using the usual transformation rule between It\^o and Stratonovich integrals \cite[Equation (1.4), p. 250]{IkedaWatanabe1989}, we may write
\begin{equation} \label{eq:ito_equation} d X_t^i = \overline b_u^i(X_t) \ d t + \sum_{\alpha = 1}^d \sigma^i_{\alpha}(X_t) d B^{\alpha}(t), \end{equation}
where $\overline b_u(x)$ is given by
\[ \overline b_u^i(x) := b^i(x) + u^i(x) + \half \sum_{\alpha = 1}^d \left( \partial_k \sigma^i_{\alpha}(x)  \right)\sigma^k_{\alpha}(x).\]
By the definition of the Stratonovich integral, $Z \circ d Y = Z \ d Y + \half d [Z, Y]$ for semimartingales $Y$ and $Z$ \cite[Equation (1.10), p.100]{IkedaWatanabe1989}, with $Z \ d Y$ denoting the It\^o integral. Therefore
\begin{align*} & [g_{ij} A^i](X_t) \circ d X_t^j \\
& = [g_{ij} A^i](X_t) \ d X_t^j + \half d [(g_{ij} A^i)(X_t), X_t^j] \\
& = [g_{ij} A^i(X_t)] \ d X^j_t + \half \partial_k (g_{ij} A^i)(X_t) d [X_t^k, X_t^j] \\
& = [g_{ij} A^i(X_t)] \left[ \overline b_u^j(X_t) \ d t + \sum_{\alpha=1}^d \sigma_{\alpha}^i(X_t) \ d B^{\alpha}(t) \right] + \half \partial_k (g_{ij} A^i)(X_t)  \sum_{\alpha = 1}^d  \sigma_{\alpha}^k(X_t) \sigma_{\alpha}^j(X_t) \ d t.
\end{align*}
Integrating over $t$ and taking expectations gives 
\begin{align*} & \E^{x_0,u} \int_0^T A(X_s) \circ d X_s \\
&  = \E^{x_0, u} \int_0^T \left\{ (g_{ij} A^i)(X_t) \left[ b_u^j(X_t) + \half \left( \partial_k\sigma^j_{\alpha}(X_t) \right) \sigma^k_{\alpha}(X_t) \right] +\half \partial_k (g_{ij} A^i)(X_t)  \sum_{\alpha = 1}^d  \sigma_{\alpha}^k(X_t) \sigma_{\alpha}^j(X_t)  \right\} \ d t \\
& = \E^{x_0,u}\int_0^T  \left\{ g_{ij} A^i (b^j + u^j)(X_t) + \half \nabla_i  A^i (X_t)  \right\} \ d t.
\end{align*}
In the last expression we recognize the divergence of the vectorfield $A^i$, resulting in the stated expression.
\end{proof}
Let $\mathcal B(M)$ denote the Borel $\sigma$-algebra on $M$. 
Let $u \in \mathfrak X(M)$.
A probability measure $\mu_u \ d x$ on $M$ is called an \emph{invariant probability distribution} for~\eqref{eq:controlled}, if
\[ \int_M \P^{x, u}(X_t \in B) \mu_u \ d x(dx) = \mu_u(B), \quad \mbox{for all $t \geq 0$ and $B \in \mathcal B(M)$}.\]
The following result on invariant measures for non-degenerate diffusions \cite[Proposition V.4.5]{IkedaWatanabe1989} is essential for our purposes.
\begin{proposition}[Existence and uniqueness of invariant probability measure]
\label{prop:existence_uniqueness_inv_measure}
For any $u \in \mathfrak X(M)$ there exists a unique invariant probability measure $\rho_u \ d x$ on $M$ for~\eqref{eq:controlled} which is absolutely continuous with respect to the Riemannian volume measure $d x$.
The density $\rho_u \in C^{\infty}(M)$ is a solution of
\begin{equation} \label{eq:fokker_planck} L^{\star}_u \rho = 0.\end{equation}
Furthermore $\rho_u > 0$ on $M$.
\end{proposition}

We will refer to~\eqref{eq:fokker_planck} as the \emph{Fokker-Planck equation}, in agreement with common physics terminology. In the remainder of this work let $\rho_u$ as defined by Proposition~\ref{prop:existence_uniqueness_inv_measure}.


In the physics literature, the \emph{empirical density} and \emph{empirical current density} are defined respectively as (see \cite{Chernyak2009}):
\[ \rho_t(x, \omega) = \frac 1 t \int_0^t \delta(x - X_s(\omega)) \ d s, \quad J_t(x, \omega) = \frac 1 t \int_0^t \dot X_s \delta(x - X_s(\omega))  \ d s.\] Here (and only here) $\delta$ denotes the Dirac delta function.
These fields, which have a clear intuitive meaning, will be very relevant in the remainder of this work and we will make these precise from a mathematical point of view.

Let $B_b(M)$ denote the set of bounded Borel-measurable functions on $M$. 
We will work with the set of empirical average measures $\left(\nu_t(dx, \omega)\right)_{t > 0}$ on $\mathcal B(M) \times \Omega^X$, defined by
\begin{equation}
\label{eq:empirical_average_measure}
\nu_{t}(B) := \frac 1 t \int_0^t \1_B(X_s) \ d s, \quad t > 0, \ B \in \mathcal B(M),
\end{equation}
where $\1_{B}$ denotes the indicator function of the set $B$. 
Our primary interest is in the infinite time horizon limit. 

\begin{proposition}
\label{prop:long_term_density}
Let $u \in \mathfrak X(M)$. For all $\varphi \in L^2(M, \rho_u \ d x)$ and $\rho_u$-almost all $x_0 \in M$,
\begin{equation}
\label{eq:ergodic_theorem} \lim_{t \rightarrow \infty} \int_M \varphi \ d \nu_t = \int_M \varphi \ \rho_u \ d x, \quad \mbox{$\P^{x_0,u}$-almost surely.}
\end{equation}
\end{proposition}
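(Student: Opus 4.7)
My plan is to reduce to Birkhoff's ergodic theorem applied to the stationary version of the Markov process, and then upgrade the resulting $\mu_u$-a.e. starting point conclusion to every $x_0 \in M$ using the strictly positive transition density of the nondegenerate diffusion. Note first that $\int_M \varphi \, d\nu_t = \frac{1}{t}\int_0^t \varphi(X_s) \, ds$, so this is a strong law of large numbers along trajectories.

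Step one: form the stationary path-space measure $\P^{\mu_u, u} := \int_M \P^{x, u} \mu_u(dx)$ on $\Omega^X$, which by invariance of $\mu_u$ is invariant under the time shifts $\theta_s$. I would then establish ergodicity of $\theta$ on $(\Omega^X, \mathcal F^X, \P^{\mu_u, u})$: any shift-invariant event $A$ defines, via $h(x) := \P^{x, u}(A)$, a bounded function that is harmonic for the semigroup $P_t^u$, and uniqueness of the invariant probability measure (Proposition~\ref{prop:existence_uniqueness_inv_measure}) forces such $h$ to be $\mu_u$-a.s.\ constant, so $\P^{\mu_u, u}(A) \in \{0,1\}$. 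With ergodicity in hand, Birkhoff's theorem yields $\frac{1}{t}\int_0^t \varphi(X_s) \, ds \to \int_M \varphi \, d\mu_u$ for $\P^{\mu_u, u}$-a.e.\ $\omega$. Writing $N$ for the exceptional set, Fubini gives $\P^{y, u}(N) = 0$ for $\mu_u$-a.e.\ $y \in M$, which, since $\rho_u > 0$ on $M$, is the same as for $dx$-a.e.\ $y$.

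Step two: upgrade to every $x_0 \in M$. Since the limit of a time average is insensitive to any finite initial segment of the path, $N = \theta_{t_0}^{-1} N$ for every $t_0 > 0$. By Hypothesis~\ref{hyp:elliptic} the generator $L_u$ is uniformly elliptic, and parabolic regularity on the compact manifold yields a smooth strictly positive transition density $p_{t_0}^u(x, y)$ with respect to $dx$. Conditioning on $X_{t_0}$ via the Markov property then gives
\[ \P^{x_0, u}(N) = \P^{x_0, u}(\theta_{t_0}^{-1} N) = \int_M \P^{y, u}(N) \, p_{t_0}^u(x_0, y) \, dy = 0 \]
for every $x_0 \in M$, which is the claim.

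The main obstacle I expect is the rigorous identification of shift-invariant path-space events with $P_t^u$-invariant functions on $M$ in the ergodicity step; this is where the uniqueness of the invariant measure really gets used. A secondary technical point, relevant only for unbounded $\varphi \in L^2(\mu_u)$, is the standard truncation $\varphi_K := (-K) \vee \varphi \wedge K$ combined with a maximal inequality for Birkhoff averages to pass from the bounded case to $L^2$ with pointwise convergence preserved.
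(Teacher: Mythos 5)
Your argument is correct, and its first half coincides with the paper's: both form the stationary path-space measure $\P^{\mu_u,u}=\int_M \P^{x,u}\,\mu_u(dx)$, invoke the ergodic theorem for the time shifts (the paper cites \cite[Theorem 3.3.1]{dapratozabczyk1996}, you re-derive ergodicity from uniqueness of $\mu_u$ via bounded harmonic functions --- the same mechanism that underlies that theorem), and then use Fubini together with $\rho_u>0$ to get the statement for $dx$-a.e.\ starting point. Where you genuinely diverge is the upgrade to \emph{every} $x_0\in M$: the paper disposes of this in one line by appealing to ``smooth dependence of the trajectories on the initial condition,'' whereas you exploit the shift-invariance of the exceptional set $N$ and the Markov property, writing $\P^{x_0,u}(N)=\int_M \P^{y,u}(N)\,p^u_{t_0}(x_0,y)\,dy=0$ using the smooth (strictly positive, though absolute continuity w.r.t.\ $dx$ already suffices) transition density guaranteed by Hypothesis~\ref{hyp:elliptic}. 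Your route is the more robust one: almost-sure convergence of time averages is not manifestly stable under perturbation of the initial point, since the null set may vary with $x_0$, so the paper's appeal to smooth dependence is really a shorthand for an argument of exactly the kind you spell out; the price you pay is the (standard) input of parabolic regularity for the transition kernel. Your closing remarks are also sound: the identification of shift-invariant events with $P^u_t$-invariant functions is where uniqueness of $\mu_u$ enters, and for unbounded $\varphi\in L^2(\mu_u)\subset L^1(\mu_u)$ one can either truncate as you suggest or apply the continuous-time Birkhoff theorem directly to $\int_0^1|\varphi(X_s)|\,ds\in L^1(\P^{\mu_u,u})$ to control the non-integer times.
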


\begin{proof}
For $u \in \mathfrak X(M)$, we define a stationary probability measure $\P^u$ on $\Omega^X$ by
\[ \P^u(G) = \int_M \P^{x,u}(G) \ \rho_u(x) \ dx, \quad G \in \mathcal F^X.\]
For $\varphi \in L^2(M, \rho_u \ d x)$ we then have, by the ergodic theorem, see e.g. \cite[Theorem 3.3.1]{dapratozabczyk1996}, that $\lim_{t \rightarrow \infty} \int_M \varphi \ d \nu_t = \int_M \varphi \  \rho_u \ d x,$ $\P^u$-almost surely.
Since $\rho_u > 0$ on $M$, this implies that 
\[ \lim_{t \rightarrow \infty} \int_M \varphi \ d \nu_t = \int_M \varphi \ \rho_u \ d x, \quad \mbox{$\P^{x_0,u}$-a.s. $\rho_u$-a.a. $x_0 \in M$.}\]
\end{proof}

Hence $\lim_{t \rightarrow \infty} \nu_t$ is $\mu_u$-almost everywhere equal to a constant.

\begin{corollary} 
\label{cor:ergodic_gaugefield}
Let $u \in \mathfrak X(M)$. For $\rho_u$-almost every $x_0 \in M$,
\begin{equation} \label{eq:ergodic_gaugefield} \lim_{T \rightarrow \infty} \frac 1 T \int_0^T \langle A(X_t), \circ d X_t \rangle = \int_M \left[  \langle A, (b+u) \rangle  + \half \div A \right] \rho_u \ d x,  \quad \mbox{$\P^{x_0,u}$-a.s.}\end{equation}
\end{corollary}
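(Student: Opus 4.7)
The plan is to reduce the almost-sure statement to the almost-sure ergodic theorem already supplied by Proposition~\ref{prop:long_term_density}, by extracting an explicit martingale term from the Stratonovich integral. Concretely, following exactly the computation in the proof of Lemma~\ref{lem:expectation_gaugefield}, I would write, in local coordinates,
\[
\int_0^T A(X_t)\circ dX_t \;=\; \int_0^T \Phi_u(X_t)\,dt \;+\; M_T,
\]
where
\[
\Phi_u \;:=\; \langle A, f+u\rangle - \tfrac{1}{2}\delta A,
\qquad
M_T \;:=\; \int_0^T A_i(X_t)\sum_{\alpha=1}^d \sigma_\alpha^i(X_t)\,dB^\alpha_t.
\]
This is just the decomposition used in the expectation proof, but kept pathwise: the drift plus the Itô--Stratonovich correction combine into $\Phi_u$, and what is discarded when passing to expectations is precisely the martingale $M_T$.

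Once this decomposition is in hand, the first term is handled by applying Proposition~\ref{prop:long_term_density} with $\varphi = \Phi_u$. Since $A$, $f$, $u$ and the metric are smooth and $M$ is compact, $\Phi_u \in C^\infty(M) \subset L^2(M,\mu_u)$, hence $\P^{x_0,u}$-almost surely
\[
\frac{1}{T}\int_0^T \Phi_u(X_t)\,dt \;\longrightarrow\; \int_M \Phi_u\,\rho_u\,dx
\;=\; \int_M\!\Big\{\langle A, f+u\rangle - \tfrac{1}{2}\delta A\Big\}\rho_u\,dx.
\]

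It remains to show $M_T/T \to 0$ almost surely, which I expect to be the only nontrivial point. Because $M$ is compact and $A, \sigma_\alpha$ are smooth, the coefficients $A_i \sigma_\alpha^i$ are uniformly bounded; hence $M_T$ is a continuous martingale whose quadratic variation satisfies $[M]_T \le cT$ for some constant $c>0$. By the Dambis--Dubins--Schwarz theorem, $M_T = W_{[M]_T}$ for a Brownian motion $W$ (on a possibly enlarged space), and then
\[
\Big|\frac{M_T}{T}\Big| \;=\; \frac{[M]_T}{T}\cdot\frac{|W_{[M]_T}|}{[M]_T} \;\le\; c\cdot \frac{|W_{[M]_T}|}{[M]_T} \;\longrightarrow\; 0 \quad \text{a.s.}
\]
on the event $[M]_\infty = \infty$, by $|W_s|/s \to 0$ a.s.; on the complementary event $[M]_\infty < \infty$ the martingale $M_T$ converges to a finite limit, and $M_T/T \to 0$ trivially. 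Combining this with the previous paragraph yields~\eqref{eq:ergodic_gaugefield}. The main obstacle is thus purely the control of the martingale term, which is standard thanks to compactness of $M$ and smoothness of the data.
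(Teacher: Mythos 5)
Your proof is correct and follows essentially the same route as the paper: the pathwise decomposition you use is exactly the computation inside the proof of Lemma~\ref{lem:expectation_gaugefield} kept before taking expectations, and the drift part is handled by Proposition~\ref{prop:long_term_density} just as the paper intends. The only difference is that you make explicit the almost-sure control $M_T/T \to 0$ of the martingale term (via Dambis--Dubins--Schwarz), which the paper treats as immediate here and handles by citing the strong law of large numbers for martingales in the analogous computation of Appendix~\ref{app:long_term_current}; this is a welcome completion rather than a deviation.
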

\begin{proof}
This is an immediate corollary of Lemma~\ref{lem:expectation_gaugefield}, Proposition~\ref{prop:existence_uniqueness_inv_measure} and Proposition~\ref{prop:long_term_density}.
\end{proof}

The above results provide sufficient motivation to rephrase Problem~\ref{prob:original_formulation} as follows.

\begin{problem}
\label{prob:ergodic_minimization}
Minimize 
\begin{equation} \label{eq:costfunction_ergodic} 
 \mathcal C(\rho, u) := \int_M \left\{ V + \half \|u\|^2 +  \langle A, (b+u) \rangle  + \half \div A \right\} \rho \ d x.
\end{equation}
 with respect to $(\rho, u) \in C^{\infty}(M) \times \mathfrak X(M)$ subject to the constraints $L_u^{\star} \rho = 0$ and $\int_M \rho \ d x = 1$.
\end{problem}

\begin{lemma}
\label{lem:remove_A}
Suppose $u, \widetilde u, b, \widetilde b \in \mathfrak X(M)$ and $V, \widetilde V \in C^{\infty}(M)$ are related by 
\begin{equation} \label{eq:b_and_V_no_A}  \widetilde b = b - A, \quad \widetilde u = u + A, \quad \mbox{and} \quad \widetilde V = V - \half \|A\|^2 + \langle A, b \rangle + \half \div A. \end{equation}
Let $\widetilde L_{\widetilde u}$ denote the generator corresponding to drift $\widetilde b$ and control $\widetilde u$. 
Then $L_u^{\star} \rho = 0$, $\int_M \rho \ d x = 1$, and $(\rho, u)$ are optimal for Problem~\ref{prob:ergodic_minimization}, if and only if 
$\widetilde L_{\widetilde u}^{\star} \rho = 0$ and $(\rho,\widetilde u)$ are optimal for Problem~\ref{prob:ergodic_minimization}, with $\mathcal C$ replaced by 
\[ \widetilde {\mathcal C}(\rho, \widetilde u) = \int_M \left\{ \widetilde V + \half \|\widetilde u \|^2 \right\} \rho \ d x.\]
\end{lemma}

\begin{proof}
First note that $\widetilde b + \widetilde u = b + u$, so that $L^{\star}_u = \widetilde L^{\star}_{\widetilde u}$, and hence $\widetilde L^{\star}_{\widetilde u} \rho = 0$ if and only if $L^{\star}_u \rho = 0$.
Furthermore $\widetilde C(\rho, \widetilde u) = C(\rho,u)$ by direct computation.
\end{proof}

\begin{remark}
\label{rem:remove_A}
As a consequence of Lemma~\ref{lem:remove_A} the vector potential $A$ may be completely removed from the problem by redefining $b$, $V$ and $u$ by~\eqref{eq:b_and_V_no_A}.
Using this observation simplifies the derivation of some of the results in subsequent sections.
\end{remark}

Note that $\dot X_s$ is not defined, a.s., so our mathematical analogue of the empirical current density requires more care.
In Appendix~\ref{app:long_term_current}, we derive  the vector field $J \in \mathfrak X(M)$ denoting current density, as 
\begin{equation}
\label{eq:long_term_current_density}
J = -\half \nabla \rho + \rho \left(b + u \right),
\end{equation}
for $\rho = \rho_u$.

By rearranging~\eqref{eq:long_term_current_density}, we can express a control $u$ in terms of $J$ and $\rho$ as
\begin{equation} \label{eq:u_intermsof_J} u = -b + \frac 1 {\rho} \left( J + \half \nabla \rho \right).\end{equation}
The following lemma is an immediate consequence of the equality $\div J = -L_u^{\star} \rho$.

\begin{lemma}
\label{lem:fokker_planck_equivalence}
Suppose $u$, $\rho$ and $J$ are related by~\eqref{eq:long_term_current_density}. Then $\div J = 0$ if and only if $L_u^{\star} \rho = 0$.
\end{lemma}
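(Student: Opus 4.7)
The plan is to show the stronger identity $\delta J_u = L_u^* \rho_u$, from which the biconditional follows immediately. Starting from the definition
\[ J_u = -\tfrac{1}{2} d\rho_u + \rho_u (f+u),\]
linearity of $\delta$ gives
\[ \delta J_u = -\tfrac{1}{2}\, \delta d \rho_u + \delta\bigl( \rho_u (f+u)\bigr).\]
The only nontrivial step is to identify $-\tfrac{1}{2}\delta d \rho_u$ with $\tfrac{1}{2}\Delta \rho_u$, so that the right-hand side matches the expression for $L_u^*$ from the preceding lemma (using Hypothesis~\ref{hyp:simple_advection} to drop the tilde on $f$).

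To justify $\Delta = -\delta d$ on smooth functions, I would invoke the fact that for $\phi \in \Lambda^0(M)$ one has $\delta \phi = 0$ trivially (since $\delta$ lowers degree), so the Hodge Laplacian $-(d\delta + \delta d)$ reduces to $-\delta d$ on $C^\infty(M)$; this coincides with the Laplace–Beltrami operator displayed in~\eqref{eq:laplace_beltrami} under the sign convention used in the paper (which on $\R^m$ with Euclidean metric recovers $\sum_i \partial_i^2$). A quick cross-check via integration by parts confirms the sign: for any test function $\psi$,
\[ \int_M \psi\, (-\delta d \rho_u)\, dx = -\int_M \langle d\psi, d\rho_u\rangle\, dx = \int_M \psi\, \Delta \rho_u \, dx,\]
where the last equality is the standard characterization of the Laplace–Beltrami operator as the $L^2$-generator of the Dirichlet form.

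Substituting this yields
\[ \delta J_u = \tfrac{1}{2}\Delta \rho_u + \delta\bigl(\rho_u(f+u)\bigr) = L_u^* \rho_u,\]
which is exactly the desired identity. Both directions of the equivalence then read off at once: $\delta J_u = 0 \Longleftrightarrow L_u^* \rho_u = 0$. The only potential obstacle is pinning down the sign convention relating $\Delta$ and $\delta d$, which is resolved directly from the local expression~\eqref{eq:laplace_beltrami} already established in the paper; no further analytic work is needed.
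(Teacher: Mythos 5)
Your proposal is correct and follows the same route as the paper: the paper's proof is precisely the observation that $\delta J_u = L_u^* \rho_u$, which you verify in detail by applying $\delta$ to~\eqref{eq:long_term_current_density} and using $\Delta = -\delta d$ on functions (consistent with the paper's conventions) together with Hypothesis~\ref{hyp:simple_advection}. Your added sign check is sound but only spells out what the paper leaves implicit.
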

In other words, $\div J = 0$ if and only if $\rho \ dx$ is invariant for~\eqref{eq:controlled}. Similar observations may be found throughout the physics literature, e.g. \cite{Risken1989, BaratoChetrite2015}.
Recall Lemma~\ref{lem:expectation_gaugefield}, where the expectation of the vector potential $A$ over the trajectory was expressed as an expectation over a Lebesgue integral. For the long term average of the gauge field this leads to the following result.

\begin{lemma}
\label{lem:ergodic_gaugefield_current}
Suppose $u \in \mathfrak X(M)$ and $x_0 \in M$, and $J$ satisfy~\eqref{eq:long_term_current_density}. Then 
\begin{equation} \label{eq:longterm_gaugefield_current} \lim_{T \rightarrow \infty} \frac 1 T \int_0^T \langle A(X_t), \circ d X_t \rangle = \int_M \langle A, J \rangle \ d x, \quad \mbox{$\P^{x_0,u}$-a.s.}\end{equation}
\end{lemma}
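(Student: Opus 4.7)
The plan is to show that the right-hand side of \eqref{eq:longterm_gaugefield_current} equals the right-hand side of \eqref{eq:ergodic_gaugefield} from Corollary~\ref{cor:ergodic_gaugefield}, after which the lemma follows immediately from that corollary. So essentially the content here is purely a rewriting of an already-established $\P^{x_0,u}$-a.s. limit in terms of the current density $J_u$.

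Concretely, I would start from $\int_M \langle A, J_u \rangle \, dx$ and substitute the definition~\eqref{eq:long_term_current_density},
\[ J_u = -\tfrac{1}{2} d\rho_u + \rho_u(f+u). \]
Bilinearity of $\langle\cdot,\cdot\rangle$ in the fibres then gives
\[ \int_M \langle A, J_u \rangle \, dx = -\tfrac{1}{2} \int_M \langle A, d\rho_u \rangle \, dx + \int_M \rho_u \, \langle A, f+u \rangle \, dx. \]
The first integral is handled by the defining adjoint relation for $\delta$ stated in the lemma preceding Lemma~\ref{lem:expectation_gaugefield}, namely $\int_M \langle d\alpha, \beta \rangle \, dx = \int_M \alpha \, \delta\beta \, dx$, applied with $\alpha = \rho_u \in C^\infty(M)$ and $\beta = A \in \Lambda^1(M)$. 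This yields $\int_M \langle A, d\rho_u \rangle \, dx = \int_M \rho_u \, \delta A \, dx$, so
\[ \int_M \langle A, J_u \rangle \, dx = \int_M \bigl\{ \langle A, f+u \rangle - \tfrac{1}{2} \delta A \bigr\} \rho_u \, dx, \]
which is exactly the right-hand side of~\eqref{eq:ergodic_gaugefield}.

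Combining this identity with Corollary~\ref{cor:ergodic_gaugefield} gives the claim $\P^{x_0,u}$-almost surely for every $x_0 \in M$. There is no real obstacle: the argument is a one-line integration by parts once $J_u$ is unfolded, and all nontrivial analytic content (smoothness and strict positivity of $\rho_u$, validity of the adjoint formula on $M$, and the almost-sure ergodic limit) has been collected in Proposition~\ref{prop:existence_uniqueness_inv_measure}, Proposition~\ref{prop:long_term_density}, and Corollary~\ref{cor:ergodic_gaugefield}.
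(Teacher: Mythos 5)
Your proposal is correct and follows essentially the same route as the paper: the paper likewise reduces the lemma to Corollary~\ref{cor:ergodic_gaugefield} and identifies the two right-hand sides via the relation between $u$, $\rho_u$ and $J_u$ together with the adjoint identity $\int_M \langle d\rho_u, A\rangle\,dx = \int_M \rho_u\,\delta A\,dx$, only written in the reverse direction (substituting $f+u = \rho_u^{-1}(J_u+\tfrac12 d\rho_u)$ into the corollary rather than unfolding $J_u$). No gap to report.
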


\begin{proof}
From Corollary~\ref{cor:ergodic_gaugefield} we have~\eqref{eq:ergodic_gaugefield}.
By~\eqref{eq:u_intermsof_J}, and partial integration, this equals 
\begin{align*}
\int_M \left( \left \langle A, \frac 1{\rho} \left( J + \half \nabla \rho \right) \right \rangle + \half \div A \right) \rho \ d x & = \int_M \langle A, J \rangle \ d x.
\end{align*}
\end{proof}

Because of the above observations, instead of varying $\rho$ and $u$ in the optimization problem~\ref{prob:ergodic_minimization}, we may as well vary $\rho \in C^{\infty}$ and $J \in \mathfrak X(M)$, while enforcing $\div J = 0$ (equivalent to the Fokker-Planck equation for $\rho$ by Lemma~\ref{lem:fokker_planck_equivalence}) and $\int_M \rho \ d x = 1$.  The uniqueness of the solution to the Fokker-Planck equation ensures that $\rho$ is positive.
The control $u$ is then determined uniquely by \eqref{eq:u_intermsof_J}.
Combining~\eqref{eq:u_intermsof_J} and ~\eqref{eq:longterm_gaugefield_current}, we may alternatively express the cost functional~\eqref{eq:costfunction_ergodic} as a function of $\rho$ and $J$, namely
\begin{equation}
\label{eq:costfunction_intermsof_J_and_rho}
\mathcal C(\rho, J) = \int_M \left[ \left( V + \frac 1 {2} \left\| \frac 1 {\rho} \left(J +\half \nabla  \rho \right) - b \right\|^2 \right) \rho + \langle A, J \rangle \right] \ d x.
\end{equation}

\begin{remark} Strictly speaking the use of $\mathcal C$ for different cost functionals is an abuse of notation but we trust this will not lead to confusion.
\end{remark}

Problem~\eqref{prob:original_formulation} can thus be rephrased as the following problem:

\begin{problem}
\label{prob:ergodic_minimization_with_current}
Minimize $\mathcal C(\rho, J)$ with respect to $\rho \in C^{\infty}(M)$ and $J \in \mathfrak X(M)$, subject to the constraints $\div J = 0$ and $\int_M \rho \ d x = 1$, where $\mathcal C(\rho, J)$ is given by~\eqref{eq:costfunction_intermsof_J_and_rho}.
\end{problem}

Although we will not directly make use of the following fact, it seems relevant enough to mention here. 
\begin{proposition}
 $\mathcal C(\rho,J)$ is convex.
\end{proposition}
\begin{proof}
First consider the integrand pointwise, in the form of the map $h : \R \times \R^n \times \R^n \rightarrow \R$ given by
\[ h(x, y, z) = \left(a + \half \left\| c + \frac {y + \half z} x \right\|^2 \right) x + \langle m, y \rangle= a x + \half \| c\|^2 x + \langle c, y + \half z \rangle + \frac{\| y + \half z\|^2}{2 x} + \langle m, y \rangle.\]
This is a summation of convex functions in $(x,y,z)$, so it follows that $h$ is convex. Note that $\mathcal C$ is obtained by integrating $h$ over $M$, taking $x = \rho$, $y = J$ and $z = \nabla \rho$.
\end{proof}

\subsection{Interpretation of the vector potential as flux}
\label{sec:flux}
In view of Remark~\ref{rem:A_actually_diff_form}, for the discussion in this section let $A \in \Lambda^1(M)$ be a vector potential in differential form.
A particular use or interpretation of $A$ is that $A(J)$ may quantify flux of $J$ through a submanifold of $M$. In particular, for a given $(n - 1)$-cycle $\alpha$ (roughly speaking, an $(n-1)$-dimensional submanifold of $M$ without boundary), there exists a unique harmonic $A \in \Lambda^1(M)$ (i.e. $\Delta A = 0$, where $\Delta$ denotes the Laplace-Beltrami operator), which depends only on the singular homology class $[\alpha] \in \mathcal H_{m-1}(M;\R)$ of $\alpha$, such that $\int_{\alpha} \star J = \int_M A(J)  \ d x$ for all $J \in \mathfrak X(M)$ satisfying $\div J = 0$.

\begin{example}[$S^1$] The divergence free $1$-forms $J$ on $S^1$ are constant, say $J = J_0 \ d \theta$ for some $J_0 \in \R$. A 0-cycle $\alpha$ of $S^1$ consists of a collection of points $\theta_1, \hdots, \theta_k \subset [0, 2 \pi)$ with multiplicities $\alpha_1, \hdots, \alpha_k$. The flux of $J$ through $\alpha$ is then simply given by $\sum_{i=1}^k \alpha_i J(\theta_i) = \sum_{i=1}^k \alpha_i J_0$. By defining a differential form $A = A_0 \ d \theta$, with constant component $A_0 := \sum_i \alpha_i \frac 1 {2 \pi}$, we find that
\[ \int_{S^1} \langle A, J \rangle \ d \theta = \int_{S^1} A_0 J_0 \ d \theta = \sum_i \alpha_i J_0 = \int_{\alpha} \star J.\]
We see that this choice of $A$ is the constant (and therefore harmonic) representative in $\mathcal H^1_{\deRham}(S^1)$ corresponding to $[\alpha] \in \mathcal H_0(S^1;\R)$.
\end{example}

A more extensive discussion of this topic may be found in Appendix~\ref{app:flux}.

\section{Unconstrained optimization -- the HJB equation}
\label{sec:hjb}

In this section we will find necessary conditions for a solution of Problem~\ref{prob:ergodic_minimization} or equivalently Problem~\ref{prob:ergodic_minimization_with_current}. In fact, for technical reasons we will work with the the formulation in terms of $\rho$ and $J$, i.e. Problem~\ref{prob:ergodic_minimization_with_current}. The main reason for this is the convenient form (in particular, the linearity) of the constraint $\div J = 0$. This may be compared to the equivalent constraint $\half \Delta \rho - \div  (\rho (b + u)) = 0$, which is non-linear as a function in $(\rho, u)$.

The approach to Problem~\ref{prob:ergodic_minimization} or Problem~\ref{prob:ergodic_minimization_with_current} is to use the method of Lagrange multipliers to enforce the constraints. Since the constraint $\div J(x) = 0$ needs to be enforced for all $x \in M$, the corresponding Lagrange multiplier is an element of a function space. A purely formal derivation of the necessary conditions using Lagrange multipliers is straightforward, but we wanted to be precise in proving necessary and sufficient conditions for optimality. 

\subsection{Sufficient condition for optimality}
\label{sec:sufficient}
Define $\mathcal U = \{ (\rho, J) \in C^{\infty} \times \mathfrak X(M) : \rho(x) \geq  0 \ \mbox{for all $x \in M$}\}$. 
Associated to Problem~\ref{prob:ergodic_minimization_with_current} we may define a Lagrangian 
\begin{equation}
 \label{eq:lagrangian}
 \mathcal L(\rho, J, \Psi, \lambda) := \mathcal C(\rho, J) + \int_M \Psi (\div J) \ d x + \lambda \left(  \int_M \rho \,  d x - 1 \right),
\end{equation}
defined for $(\rho, J) \in \mathcal U$, $\Psi \in C^{\infty}(M)$ and $\lambda \in \R$. The dual cost functional is as usual defined as
\[ \mathcal C^{\star}(\Psi, \lambda) := \inf_{(\rho, J) \in \mathcal U} \mathcal L(\rho, J, \Psi, \lambda).\]
Write $\mathcal V := C^{\infty}(M) \times \R$ for the domain of the dual cost functional $\mathcal C^{\star}$. It is immediate from the definition that for every $(\rho, J) \in \mathcal U$ satisfying the constraints 
\begin{equation}
 \label{eq:constraints} \div J = 0 \quad \mbox{and} \quad \int_M \rho \ d x = 1,
\end{equation}
 and 
$(\Psi, \lambda) \in \mathcal V$ we have 
\[\mathcal C^{\star}(\Psi, \lambda) \leq  \mathcal L(\rho,J, \Psi, \lambda) = \mathcal C(\rho, J).\]
The following lemma is therefore immediate.
\begin{lemma}
\label{lem:duality-principle}
Suppose $(\rho^{\star}, J^{\star}) \in \mathcal U$, $(\Psi^{\star}, \lambda^{\star}) \in \mathcal V$. If the constraints~\eqref{eq:constraints} are satisfied for $(\rho^{\star}, J^{\star})$ and $\mathcal C^{\star}(\Psi^{\star}, \lambda^{\star}) = \mathcal C(\rho^{\star}, J^{\star})$, then $(\rho^{\star}, J^{\star})$ is optimal for Problem~\ref{prob:ergodic_minimization_with_current}.
\end{lemma}

\begin{lemma}
\label{lem:dual-functional}
Suppose $A = 0$. The dual cost functional $\mathcal C^{\star}$ can be expressed as
\begin{equation}
 \mathcal C^{\star}(\Psi, \lambda) = \begin{cases}- \lambda \quad & \mbox{if} \ I[\Psi, \lambda](x) \geq 0 \ \mbox{for all $x \in M$}, \\
                                      -\infty & \mbox{if} \ I[\Psi, \lambda](x) < 0 \ \mbox{for some $x \in M$},
                                     \end{cases}
\end{equation}
where
\begin{equation}
 I[\Psi, \lambda]:= V - \half \| \nabla \Psi\|^2  - \langle b, \nabla \Psi \rangle - \half \Delta \Psi + \lambda.
\end{equation}
\end{lemma}

\begin{proof}
We have 
\[ \mathcal L(\rho, J, \Psi, \lambda) = \int_M \left[ \left\{ V + \half \left\| \frac 1 \rho (J + \half \nabla \rho) - b \right\|^2 + \lambda \right\} \rho  - \langle J, \nabla \Psi \rangle \right] \ d x - \lambda.\]
For fixed  $\rho$ the choice $J^{\star} = -\half \nabla \rho  + \rho( b + \nabla \Psi)$ determines the pointwise minimum of the integrand. For this choice of $J$, we obtain
\begin{align*} \mathcal L(\rho, J^{\star}, \Psi, \lambda) & = \int_M \left[ \left\{ V + \half \|\nabla \Psi\|^2 +  \lambda \right \} \rho + \langle (\half \nabla \rho - \rho(b + \nabla \Psi)), \nabla \Psi \rangle \right] \ d x- \lambda \\
 & = \int_M \left\{ V - \half \| \nabla \Psi\|^2 + \half \langle  \nabla (\log \rho), \nabla \Psi \rangle - \langle b, \nabla \Psi \rangle + \lambda \right\} \rho \ d x - \lambda \\
 & = \int_M\left[  \left\{ V - \half \| \nabla \Psi\|^2  - \langle b, \nabla \Psi \rangle + \lambda  \right\} \rho  + \half \langle  \nabla \rho, \nabla \Psi \rangle \right]  \ d x -  \lambda \\
 & = \int_M  \left\{ V - \half \| \nabla \Psi\|^2  - \langle b, \nabla \Psi \rangle - \half \Delta \Psi  + \lambda \right\} \rho    \ d x - \lambda.
\end{align*}
Now if $I(\Psi, \lambda)  < 0$ on some open subset of $M$, then $\lambda(\rho, J^{\star}, \Psi, \lambda)$ can assume arbitrarily large negative values by letting $\rho$ approach a Dirac delta peak centered within that subset, so that in this case $\mathcal L(\rho, J^{\star}, \Psi, \lambda) \rightarrow -\infty$. If $I(\Psi, \lambda) \geq 0$ on $M$, then $\mathcal L(\rho, J^{\star}, \Psi, \lambda)$ is minimized by taking $\rho^{\star} = 0$.
\end{proof}

\begin{lemma}
\label{lem:sufficient_condition}
Suppose $A = 0$. Suppose $(\Psi, \lambda) \in \mathcal V$ satisfies $I[\Psi,\lambda] = 0$, i.e.
\[ \half \Delta \Psi + \half \| \nabla \Psi\|^2 + \langle b, \nabla \Psi \rangle - V = \lambda.\]
Let $u =\nabla \Psi$ and let $\rho > 0$ be the unique solution to the Fokker-Planck equation $L_u^{\star} \rho = 0$ with $\int_M \rho \ d x = 1$. Define $J = - \half \nabla \rho + \rho (b + \nabla \Psi)$.
Then $(\rho,J)$ solves Problem~\ref{prob:ergodic_minimization_with_current} , and $C(\rho,J) = -\lambda$.\end{lemma}

\begin{proof}
By Lemma~\ref{lem:fokker_planck_equivalence}, $\div J = 0$, so that $(\rho,J)$ satisfy the constraints~\eqref{eq:constraints}. Furthermore
\begin{align*} \mathcal C(\rho,J) & = \int_M \left\{ V + \half \left\| \frac 1 \rho (J + \half \nabla \rho) - b \right\|^2  \right\} \rho \ d x = \int_M \left\{ 2 V - \half \Delta \Psi - \langle b, \nabla \Psi \rangle \right\} \rho  \ d x + \lambda \\
& = \int_M \left[  2 V \rho  + (\div(b \rho) - \half \Delta \rho ) \Psi \right]\ d x + \lambda \\
& = \int_M \left[ 2 V \rho - (L^{\star}_u \rho) \Psi  - \div(\rho u) \Psi \right] \ d x + \lambda  = \int_M  \left\{ 2 V  + \|\nabla \Psi\|^2 \right\} \rho  \ d x + \lambda = 2 \mathcal C(\rho,J) + \lambda.
\end{align*}
Consequently, $\mathcal C(\rho, J) = - \lambda$. Also, for this choice of $\Psi$ and $\lambda$ we have by Lemma~\ref{lem:dual-functional} that $\mathcal C^{\star}(\Psi, \lambda) = - \lambda$. The optimality of $(\rho, J)$ now follows from Lemma~\ref{lem:duality-principle}.
\end{proof}

The following proposition is now a direct consequence of Remark~\ref{rem:remove_A}, Lemma~\ref{lem:sufficient_condition} and a brief computation.

\begin{proposition}[Gauge invariant Hamilton-Jacobi-Bellman equation -- sufficiency]
\label{prop:nonlinear_hjb}
Suppose $(\Psi, \lambda) \in \mathcal V$ satisfies
\begin{equation}
 \label{eq:nonlinear_hjb}
\half \div( \nabla \Psi - A)  + \half \| \nabla \Psi - A \|^2 + \langle b, (\nabla \Psi - A)\rangle - V  = \lambda.
\end{equation}
Let $u = \nabla \Psi - A$, let $\rho$ denote the unique solution to $L_{u}^{\star} \rho = 0$ with $\int_M \rho \ d x = 1$, and let $J = - \half \nabla \rho + \rho(b + \nabla \Psi - u)$. Then $(\rho,J)$ is optimal for Problem~\ref{prob:ergodic_minimization_with_current} and $\mathcal C(\rho,J)=- \lambda$.
\end{proposition}

Equation~\eqref{eq:nonlinear_hjb}, the \emph{gauge-invariant Hamilton-Jacobi-Bellman equation} was already discussed in our related physics paper, see \cite[equation (10)]{Chernyak2013}. As remarked there, the equation may be linearised by an exponential transformation. The verification of this result is straightforward.

\begin{corollary}[Linear gauge invariant Hamilton-Jacobi-Bellman equation -- sufficiency]
\label{cor:linear_hjb}
Suppose $(\psi, \lambda) \in C^{\infty}(M) \times \R$, satisfy $\psi > 0$ and 
\begin{equation}
 \label{eq:linear_hjb} 
 \half \div(\nabla \psi - A \psi ) + \langle (b-A), (\nabla \psi - A \psi) \rangle -( V + \half \|A\|^2) \psi = \lambda \psi.
\end{equation}
Then~\eqref{eq:nonlinear_hjb} is satisfied for $\Psi = \log \psi$, so that the conclusion of Proposition~\ref{prop:nonlinear_hjb} applies. In terms of $\psi$ the optimal control and associated current are given by
\begin{equation}
\label{eq:optimal-control-current} u = \nabla \log \psi - A \quad \mbox{and} \quad J = -\half \nabla \rho + \rho(b + \nabla \log \psi -A).
\end{equation}
\end{corollary}

Essentially~\eqref{eq:linear_hjb} is an eigenvalue problem for a non-degenerate second order elliptic differential equation on a compact manifold, for which it is well established that a solution $(\psi, \lambda)$ exists such that $\psi$ is smooth and for which $\psi > 0$; see \cite{Warner1983}.

\subsection{Necessary condition for optimality}
\label{sec:necessary}

In order to obtain necessary conditions for optimality of $(\rho, J)$ for Problem~\ref{prob:ergodic_minimization_with_current} we will relax the problem to an optimization problem over Sobolev spaces. In particular, we will rephrase it as the following abstract optimization problem.
Let $X$ and $Z$ be Banach spaces and let $\mathcal U$ be an open set in $X$. Let $\mathcal C : \mathcal U \subset X \rightarrow \R$ and $\mathcal H : \mathcal U \subset X \rightarrow Z$.

\begin{problem}
\label{prob:optimization_in_banach_spaces}
Minimize $\mathcal C(x)$ over $\mathcal U$ subject to the constraint $\mathcal H(x) = 0$.
\end{problem}

The \emph{Fr\'echet derivative} \cite{Luenberger1969} of a mapping $T : D \subset X \rightarrow Y$ in $x \in D$ will be denoted by $T'(x) \in L(X;Y)$.
We will need the following notion.
\begin{definition}[Regular point]
\label{def:regular-point}
 Let $T$ be a continuously Fr\'echet differentiable function from an open set $D$ in a Banach space $X$ into a Banach space $Y$. If $x_0 \in D$ is such that $T'(x_0)$ maps $X$ onto $Y$, then the point $x_0$ is said to be a \emph{regular point} of the transformation $T$.
\end{definition}

For the abstract Problem~\ref{prob:optimization_in_banach_spaces} the following necessary condition holds for a local extremum \cite[Theorem 9.3.1]{Luenberger1969}.
\begin{lemma}[Lagrange multiplier necessary conditions]
\label{lem:lagrange_multipliers}
Suppose $\mathcal C$ and $\mathcal H$ are continuously Fr\'echet differentiable on $\mathcal U$. If $\mathcal C$ has a local extremum under the constraint $\mathcal H(x) = 0$ at the regular point $x_0 \in \mathcal U$, then there exists an element $z^{\star}_0 \in Z^{\star}$ such that 
\[ \mathcal C'(x_0) + \langle \mathcal H'(x_0), z_0^{\star} \rangle = 0.\]
Here $\langle \cdot, \cdot \rangle$ denotes the pairing between $Z$ and $Z^{\star}$.
\end{lemma}

We will define Sobolev spaces of functions and  vector fields as follows. 
For $k \in \N \cup \{ 0 \}$
let $H^k(M)$ be the completion of $C^{\infty}(M)$ with respect to the norm
\[ \| \varphi\|_{H^k(M)}^2 := \sum_{l=0}^k  \int_M \| \nabla^l \varphi \|^2 \ d x,\]
equipped with Hilbert space structure induced by $\| \cdot\|_{H^k(M)}$.
Similarly, let $H^k_{\mathfrak X}(M)$ be the completion of $\mathfrak X(M)$ with respect to the norm
\[ \| \xi \|_{H^k_{\mathfrak X}(M)}^2 := \sum_{l=0}^k \int_M  \| \nabla^l \xi \|^2 \ d x,\]
equipped similarly with Hilbert space structure.
Note that the Laplace-Beltrami operator $\Delta$ is a bounded mapping $\Delta: H^k(M) \rightarrow H^{k-2}(M)$. Also $\div: H^{k}_{\mathfrak X}(M) \rightarrow H^{k-1}(M)$ and $\nabla : H^{k}(M) \rightarrow H^{k-1}_{\mathfrak X}(M)$ are bounded linear mappings.
Recall Sobolev's Lemma \cite[Proposition 4.3.3]{Taylor1996}.

\begin{lemma}[Sobolev embedding]
\label{lem:sobolev_embedding}
Suppose $\varphi \in H^k(M)$ and suppose $m \in \N \cup \{0\}$ satisfies $k > n / 2 + m$. Then $\varphi \in C^m(M)$.
\end{lemma}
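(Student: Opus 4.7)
The plan is to reduce the statement to the classical Sobolev embedding on Euclidean space by means of a partition of unity, and then prove the Euclidean version using the Fourier transform. Because $M$ is compact, I would cover it by finitely many coordinate charts $(U_i, \varphi_i)$, $i = 1, \ldots, N$, with $\varphi_i(U_i)$ a bounded open subset of $\R^n$, and choose a smooth partition of unity $\{\chi_i\}$ subordinate to this cover. Then $\omega = \sum_i \chi_i \omega$, and each $\chi_i \omega$ is compactly supported in $U_i$, so in the local coordinates supplied by $\varphi_i$ it is a tuple of compactly supported scalar functions on $\R^n$ (one per basis $p$-form). Since $C^m(M)$ is a linear space and smoothness is a local property, it suffices to show that each such coordinate component lies in $C^m(\R^n)$.

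Next, I would verify that the intrinsic norm $\|\cdot\|_{k,p}$, defined in terms of covariant derivatives and the metric $g$, is equivalent on each chart to the classical Sobolev norm defined by iterated partial derivatives of coordinate components. This uses that $g$, $g^{-1}$, and the Christoffel symbols together with all of their derivatives are smooth and uniformly bounded on the compact $M$; one expands $\nabla^l \omega$ in coordinates to bound it by coordinate partials up to order $l$ (with smooth bounded coefficients), and conversely. As a result, $\omega \in H^k(M)$ implies that each coordinate component $u$ of $\chi_i \omega$ belongs to the standard Sobolev space $H^k(\R^n)$ of compactly supported functions.

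The problem is now reduced to showing that if $u \in H^k(\R^n)$ has compact support and $k > n/2 + m$, then $u \in C^m(\R^n)$. For this I use the Fourier characterization $\|u\|_{H^k}^2 = \int_{\R^n} (1+|\xi|^2)^k |\hat u(\xi)|^2 \, d\xi < \infty$. For any multi-index $\alpha$ with $|\alpha| \leq m$, Cauchy--Schwarz gives
\[ \int_{\R^n} |\xi^\alpha \hat u(\xi)| \, d\xi \leq \left( \int_{\R^n} \frac{|\xi|^{2|\alpha|}}{(1+|\xi|^2)^k} \, d\xi \right)^{1/2} \left( \int_{\R^n} (1+|\xi|^2)^k |\hat u(\xi)|^2 \, d\xi \right)^{1/2}, \]
and the first integral is finite precisely when $2k - 2|\alpha| > n$, i.e.\ when $k > n/2 + |\alpha|$, which is guaranteed by our hypothesis. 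Hence $\xi^\alpha \hat u \in L^1(\R^n)$, so by Fourier inversion $\partial^\alpha u$ is continuous and bounded for every $|\alpha| \leq m$, giving $u \in C^m(\R^n)$.

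The main obstacle is not the Fourier-analytic core, which is routine, but the bookkeeping required to justify the equivalence of the intrinsic Sobolev norm (built from covariant derivatives on differential forms of arbitrary degree) with the coordinatewise Euclidean Sobolev norm. This amounts to writing $\nabla^l \omega$ in local coordinates, commuting $\nabla$ with coordinate partials at the cost of smooth bounded curvature and Christoffel terms, and handling the frame dependence of the components of a $p$-form; once this is done cleanly, the rest of the argument is standard.
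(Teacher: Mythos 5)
Your proposal is correct, and it is essentially the standard argument: the paper does not prove this lemma at all but simply quotes it from Taylor \cite[Proposition 4.3.3]{taylor1996}, where the proof proceeds exactly along your lines (localization by a partition of unity plus the Fourier-transform estimate $\int |\xi^\alpha \hat u|\,d\xi \leq C\|u\|_{H^k}$ for $|\alpha|\leq m$, $k > n/2 + m$). The only part you leave as bookkeeping --- the equivalence on a compact manifold of the covariant-derivative Sobolev norm with the chartwise Euclidean Sobolev norm --- is indeed standard and unproblematic, so nothing essential is missing.
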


For $k \in \N$, $k \geq n/2+1$, define spaces as follows. Let $X^k := H^{k+1}(M) \times H^k_{\mathfrak{X}}(M)$.
Since $k \geq n/2+1$, we have by the Sobolev Lemma that $(\rho, J) \in X^k$ satisfies $\rho \in C(M)$.
Furthermore define 
\begin{align*} H^k_+(M) & := \left\{ \rho \in H^{k}(M) : \rho > 0 \ \mbox{on} \ M \right\}, \\
 \mathcal U^k & := H^{k+1}_+(M) \times H^{k}_{\mathfrak X}(M), \\
 Y^k & :=  \left\{ \psi \in H^{k-1}(M): \psi = \div \xi \ \mbox{for some} \ \xi \in H^k_{\mathfrak X}(M) \right\}, \quad & \mbox{and} \\
 Z^k & := Y^k \times \R.
\end{align*}
The condition $\rho > 0$ defines an open subset $\mathcal U^k \subset X^k$.



\begin{lemma}
Suppose $k \geq n/2+1$. The mapping $\mathcal C(\rho, J)$, as given by~\eqref{eq:costfunction_intermsof_J_and_rho}, may be continuously extended to a mapping $\mathcal C : \mathcal U^k \rightarrow \R$. Moreover, the mapping $\mathcal C$ is continuously differentiable on $\mathcal U^k$ with Fr\'echet derivative $\mathcal C'(\rho, J) \in L(X^k; \R)$ given for $(\rho, J) \in \mathcal U^k$ by
\begin{align*} \mathcal C'(\rho,J) : (\zeta, G) & \mapsto \int_M \left\{ V + \frac  1{ 2} \|b\|^2 - \frac 1 { 2  \rho^2} \| J + \half \nabla \rho \|^2 + \half \div \left(  b - \frac 1 {\rho} \left( J + \half \nabla \rho \right) \right) \right\} \zeta  d x \\
& \quad + \int_M \left \langle \left( - b + \frac 1 {\rho} ( J + \half \nabla \rho) \right) + A , G \right \rangle  \ d x.
\end{align*}
\end{lemma}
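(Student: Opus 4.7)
The plan is to expand the functional $\mathcal{C}(\rho,J)$ into a finite sum of explicit terms, show each term is well defined and $C^1$ on $\mathcal U$ using the Sobolev embedding $W^{k+1}(M)\hookrightarrow C(M)$ together with the open positivity condition $\rho>0$, compute Gâteaux derivatives term by term, and recombine them via one integration by parts. Expanding $\|-f+\rho^{-1}(J+\tfrac12 d\rho)\|^2\,\rho$ yields
\[
\mathcal{C}(\rho,J)=\int_M V\rho\,dx+\frac1{2\lambda}\int_M \|f\|^2\rho\,dx-\frac1\lambda\int_M\langle f,J+\tfrac12 d\rho\rangle\,dx+\frac1{2\lambda}\int_M\frac{\|J+\tfrac12 d\rho\|^2}{\rho}\,dx+\int_M\langle A,J\rangle\,dx.
\]
All terms but the last are polynomial in $(\rho,d\rho,J)$ modulo the factor $1/\rho$; this reduction is the key structural observation.

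For the continuous extension, because $k>n/2$ Lemma~\ref{lem:sobolev_embedding} gives $W^{k+1}(M)\hookrightarrow C(M)$, so $\rho\in C(M)$, and on the open set $\mathcal P$ the compactness of $M$ yields $\inf_M\rho>0$ locally uniformly in $\rho$ (i.e.\ uniformly on a $W^{k+1}$-neighbourhood of any point of $\mathcal P$). Thus $\rho\mapsto 1/\rho$ is continuous from $\mathcal P$ into $C(M)$, and even into $W^{k+1}(M)$ since $k+1>n/2$ so $W^{k+1}(M)$ is a Banach algebra under pointwise product; the same fact makes the pointwise products appearing in every integrand members of $L^2(M)$, hence integrable. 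This gives the desired extension $\mathcal{C}:\mathcal U\to\R$, continuous in both arguments.

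For Fréchet differentiability I would compute the directional derivative in $(\zeta,G)\in X$ of each summand. The linear-in-$J$ pieces produce $\int\langle -f/\lambda+A,G\rangle\,dx$; the terms $\int V\rho$ and $\tfrac1{2\lambda}\int\|f\|^2\rho$ contribute $V+\tfrac1{2\lambda}\|f\|^2$ paired with $\zeta$; the cross term produces $-\tfrac1{2\lambda}\delta f\cdot\zeta$ after integrating by parts against $d\zeta$; and writing $K:=J+\tfrac12 d\rho$, the quadratic/rational term differentiates to
\[
\frac1{2\lambda}\int_M\left(-\frac{\|K\|^2}{\rho^2}\zeta+\frac1\rho\langle K,d\zeta\rangle+\frac{2}{\rho}\langle K,G\rangle\right)dx.
\]
Integrating by parts in the $d\zeta$-term, using $\langle K/\rho,d\zeta\rangle_{L^2}=\langle\delta(K/\rho),\zeta\rangle_{L^2}$ (valid because $K/\rho\in W^{k,1}(M)$ by the Banach-algebra property of $W^{k+1}(M)$ and $k\geq 1$), and combining with the $-\delta f/(2\lambda)$ contribution yields the divergence term $\tfrac1{2\lambda}\delta(-f+\rho^{-1}(J+\tfrac12 d\rho))$. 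Gathering all pieces reproduces the stated formula for $\mathcal{C}'(\rho,J)$. Continuity of the Fréchet derivative in $(\rho,J)$ follows from the same Banach-algebra and Sobolev-embedding arguments applied to the coefficient fields appearing in $\mathcal{C}'(\rho,J)$, and the difference between the Gâteaux and Fréchet derivatives is closed off by the uniform boundedness of the linear functional over $(\zeta,G)$ of unit $X$-norm.

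The main obstacle is the rigorous handling of the reciprocal $1/\rho$: one must verify both that $\rho\mapsto 1/\rho$ is $C^1$ from $\mathcal P\subset W^{k+1}(M)$ into $W^{k+1}(M)$ and that the multiplications $\rho^{-1}\cdot\|K\|^2$ and $\rho^{-1}K$ land in function spaces with enough regularity to legitimise the integration by parts (i.e.\ $\delta(K/\rho)\in L^2(M)$). Both follow from the fact that under~\eqref{eq:condition_k} the space $W^{k+1}(M)$ is a Banach algebra and continuously embedded in $C(M)$, so that pointwise composition with the smooth map $t\mapsto 1/t$ on $(0,\infty)$ is continuously differentiable; once this is settled the remainder of the argument is a routine verification.
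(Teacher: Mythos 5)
Your proposal is correct and follows essentially the same route as the paper: compute the (Gâteaux) derivative of the integrand directly in the directions $(\zeta,G)$, integrate the $d\zeta$-term by parts to produce the $\frac 1{2\lambda}\delta\bigl(-f+\frac 1\rho(J+\half d\rho)\bigr)$ contribution, and obtain boundedness/continuity from the embedding $W^{k+1}(M)\hookrightarrow C(M)$ and the positivity of $\rho$. The only difference is that you spell out the functional-analytic details (Banach-algebra property, $C^1$ dependence of $1/\rho$, legitimacy of the integration by parts) that the paper's terse proof leaves implicit, which is a welcome elaboration rather than a deviation.
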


\begin{proof}
We compute the directional (Gateaux) derivative $\mathcal C'(\rho,u)$ to be the linear functional on $X$ given by
\begin{align*} & (\zeta, G)\mapsto  \\
& \int_M \left\{ V + \frac 1 {2} \left\|-b + \frac{1}{\rho} (J + \half \nabla \rho ) \right\|^2 \right\} \zeta
+  \left \langle - b + \frac 1 { \rho} (J + \half \nabla \rho ), - \frac 1 {\rho^2} \left( J + \half \nabla \rho\right) \zeta + \frac 1 {2 \rho} \nabla \zeta \right \rangle \ \rho \, d x \\
& \quad + \int_M \left\{  \left \langle - b + \frac 1 {\rho} ( J + \half \nabla \rho), \frac 1 {\rho} G \right \rangle \rho + \langle A, G \rangle  \right\} \, d x,
\end{align*}
This is after rearranging, and partial integration of the term containing $\nabla \zeta$, equal to the stated expression.
The derivative $\mathcal C'(\rho, u)$ is a bounded functional on $X^k$ since $V, 1 /\rho, d \rho, b, J$ and $A$ are bounded on $M$.  Since the derivative depends continuously on $(\rho,u)$, it is in fact the Fr\'echet derivative of $\mathcal C$.
\end{proof}

We define the constraint mapping $\mathcal H : \mathcal U^k \rightarrow Z^k$ as
\begin{equation} \label{eq:constraint-mapping} \mathcal H(\rho, J) := \left( \div J, \int_M \rho \ d x - 1 \right), \quad (\rho, J) \in \mathcal U^k. \end{equation}
The following lemma is now immediate.
\begin{lemma}
The mapping $\mathcal H$ is continuously differentiable on $X^k$, with Fr\'echet derivative $\mathcal H'(\rho, J) \in L(X^k; Z^k)$ given for $(\rho, J) \in X^k$ by
\[ \mathcal H'(\rho,J) : (\zeta, G) \mapsto \left( \div G, \int_M \zeta \ d x \right), \quad (\zeta, v) \in X^k.\]
\end{lemma}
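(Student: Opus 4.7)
The key observation is that $\mathcal H$ is an affine map: writing $T : X \to Z$ for the linear part
\[
T(\rho, J) := \left( \delta J, \int_M \rho \ d x \right),
\]
we have $\mathcal H(\rho,J) = T(\rho,J) - (0,1)$. For an affine continuous map between Banach spaces, the Fréchet derivative at every point is simply the linear part $T$, and since $T$ does not depend on the base point, it is automatically continuous as a map from $X$ into $L(X;Z)$. So the whole lemma reduces to showing that $T$ is a well-defined bounded linear operator from $X$ into $Z$.

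First I would check the target: the second component $\int_M \rho \ d x$ is obviously in $\R$, and the first component $\delta J$ lies in $Z_1$ essentially by the definition of $Z_1$, which was set up precisely as the image of $\delta$ applied to $W^{k,1}(M)$; in particular $\delta J \in W^{k-1}(M)$ by the boundedness of $\delta : W^{k,1}(M) \to W^{k-1,0}(M)$ noted earlier in the excerpt. Next I would verify the boundedness estimates:
\[
\|\delta J\|_{k-1,0} \leq C_1 \|J\|_{k,1}, \qquad \left| \int_M \rho \ d x \right| \leq \operatorname{vol}(M)^{1/2} \|\rho\|_{L^2(M)} \leq C_2 \|\rho\|_{k+1,0},
\]
where the first estimate is the stated boundedness of $\delta$ and the second follows from compactness of $M$ together with Cauchy--Schwarz and the embedding $W^{k+1}(M) \hookrightarrow L^2(M)$. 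Summing gives $\|T(\rho,J)\|_Z \leq C (\|\rho\|_{k+1,0} + \|J\|_{k,1})$, so $T \in L(X;Z)$.

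Finally, to confirm the formula for $\mathcal H'(\rho,J)$, I would note that for any $(\zeta,G) \in X$,
\[
\mathcal H(\rho + \zeta, J + G) - \mathcal H(\rho,J) = T(\zeta,G) = \left( \delta G, \int_M \zeta \ d x \right),
\]
exactly, with no higher-order remainder. Hence $\mathcal H'(\rho,J)(\zeta,G) = (\delta G, \int_M \zeta \ d x)$, independently of $(\rho,J)$, so the derivative is continuous (indeed constant) as a map $\mathcal U \to L(X;Z)$.

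There is no real obstacle in this lemma; the only thing to keep an eye on is the bookkeeping that $\delta J$ lands in $Z_1$ (rather than just in $W^{k-1}(M)$), but this is immediate from the definition of $Z_1$ as a subspace of $W^{k-1}(M)$ designed to contain exactly such elements.
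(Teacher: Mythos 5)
Your proof is correct and matches the paper's reasoning: the paper simply declares this lemma ``immediate'' (offering no written proof), and the intended justification is exactly your observation that $\mathcal H$ is affine with bounded linear part, so its Fr\'echet derivative is that constant linear part. Your additional bookkeeping (boundedness of $\delta$, the trivial estimate for $\int_M \zeta \, d x$, and the fact that $\delta G \in Z_1$ by the very definition of $Z_1$) fills in precisely what the paper leaves unsaid.
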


Every $(\rho, J) \in X^k$ is regular for $\mathcal H$, thanks to our choice of the function space $Z^k$.
\begin{lemma}
Any $(\rho, J) \in X^k$ is a regular point of $\mathcal H$ (in the sense of Definition~\ref{def:regular-point}).
\end{lemma}
\begin{proof}
Let $(\Psi, \kappa) \in Z^k = Y^k \times \R$. In particular there exists a $\xi \in H^{k}_{\mathfrak X}(M)$ such that $\Psi = \div \xi$. We may pick $G = \xi$, and $\zeta$ a constant function such that $\int_M \zeta \ d x = \kappa$. Then $\mathcal H'(\rho, J) (\zeta, G) = (\Psi, \kappa)$, showing that $\mathcal H'(\rho,J)$ is onto.
\end{proof}

In order to apply the abstract Lagrange multiplier theorem (Lemma~\ref{lem:lagrange_multipliers}) in a useful manner, we need to give interpretation to the dual spaces $(Z^k)^{\star}$, and in particular to $(Y^k)^{\star}$.
Recall that the spaces $(H^l(M))^{\star}$, for $l \in \N \cup \{0\}$ may be canonically identified through the $L^2(M)$-inner product with spaces of distributions, denoted by $H^{-l}(M)$ \cite[Proposition 4.3.2]{Taylor1996}. In other words, if $z \in (H^l(M))^{\star}$, then there exists a distribution $\Phi \in H^{-l}(M)$ such that $z(\Psi) = \int_M \Phi \Psi \ d x$. Now in case $\Psi \in Y^k$, i.e. $\Psi = \div \xi$ for some $\xi \in H^k_{\mathfrak X}(M)$, then $z(\Psi) = \int_M \Phi \div \xi \ d x = - \int_M \langle \nabla \Phi, \xi \rangle \ d x$ for some $\Phi \in H^{-(k-1)}(M)$. Therefore the choice of $\Phi \in H^{-(k-1)}(M)$ representing $z \in (Y^k)^{\star}$ is fixed up to the addition of a ``constant'' distribution: if $\nabla \gamma \in H^{-(k-1)}(M)$ vanishes in a weak sense, then $\int_M (\Phi + \gamma) \div \xi \ d x = \int_M \Phi \div \xi \ d x$. We summarize this in the following lemma.

\begin{lemma}
\label{lem:Y_dual}
$(Y^k)^{\star} \cong H^{-(k-1)}(M) / \left\{ \gamma \in H^{-(k-1)}(M) : \nabla \gamma = 0 \right\}$.
\end{lemma}

We may now apply Lemma~\ref{lem:lagrange_multipliers} to obtain the following preliminary result. 

\begin{proposition}[Gauge invariant Hamilton-Jacobi-Bellman equation -- necessity]
\label{prop:nonlinear-hjb-necessary}
Suppose $(\rho, J) \in C^{\infty} \times \mathfrak X(M)$ is a local extremum of $\mathcal C$, defined by~\eqref{eq:costfunction_intermsof_J_and_rho}, under the constraint that $\mathcal H(\rho, J) = 0$. Then there exists $\Psi \in C^{\infty}(M)$ and $\lambda \in \R$ satisfying~\eqref{eq:nonlinear_hjb} and such that $J = - \half \nabla \rho + \rho (b + \nabla \Psi - A)$. The corresponding control field $u \in \mathfrak X(M)$ is given by $u = \nabla \Psi - A$.
\end{proposition}

\begin{proof}
Let $k \geq n/2 + 1$. 
Then $(\rho, J) \in \mathcal U^k$. By Lemma~\ref{lem:lagrange_multipliers}, there exists an element $(\Psi, \lambda) \in H^{-(k-1)}(M) \times \R$ such that the following equations hold.
\begin{align*}
 V + \frac  1{ 2} \|b\|^2 - \frac 1 { 2  \rho^2} \| J + \half \nabla \rho \|^2 + \frac 1 {2 } \div \left(  b - \frac 1 {\rho} \left( J + \half \nabla \rho \right) \right) + \lambda & = 0, \quad \mbox{and} \\
 \left( - b + \frac 1 {\rho} ( J + \half \nabla \rho) \right) + A - \nabla \Psi & = 0.
\end{align*}
(By Lemma~\ref{lem:Y_dual} $\Psi$ is defined up to a constant). Substituting the second equation into the first, and making some rearrangements, gives the sytem~\eqref{eq:nonlinear_hjb}. Then $\Psi \in C^{\infty}(M)$ as a result of the expression for $J$. The expression for $u$ is an immediate result of~\eqref{eq:u_intermsof_J}.
\end{proof}

By letting $\psi = \exp(\Psi)$, we immediately obtain the following result.

\begin{corollary}[Linear gauge invariant HJB equation -- necessity]
\label{cor:linear-hjb-necessary}
Suppose $(\rho, J) \in \mathcal U$ is a local extremum of $\mathcal C$, defined by~\eqref{eq:costfunction_intermsof_J_and_rho}, under the constraints~\eqref{eq:constraints}. Then there exists a $\psi \in C^{\infty}(M)$, $\psi > 0$ on $M$, and $\lambda \in \R$ such that~\eqref{eq:linear_hjb} holds.
Furthermore $\rho \in C^{\infty}(M)$, $J \in \mathfrak X(M)$ and the associated control field $u \in \mathfrak X(M)$ are  related by~\eqref{eq:optimal-control-current}.\end{corollary}

%
%

Since the problem considered in Corollary~\ref{cor:linear_hjb} is a relaxed version of Problem~\ref{prob:ergodic_minimization_with_current}, and smoothness of $J$ and $\rho$ is established in the relaxed case, we immediately have the following corollary.

\begin{corollary}
Suppose $(\rho, u)$ is a solution of Problem~\ref{prob:ergodic_minimization}, or equivalently that $(\rho, J)$ a solution of Problem~\ref{prob:ergodic_minimization_with_current}. Then the results of Corollary~\ref{cor:linear-hjb-necessary} hold.
\end{corollary}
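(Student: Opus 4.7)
The corollary extends Theorem~\ref{thm:linear_hjb} from relaxed (Sobolev) local extrema to solutions of the original smooth problem. My plan is to show that a smooth solution $(\rho, J)$ of Problem~\ref{prob:ergodic_minimization_with_current} automatically satisfies the Lagrange multiplier condition invoked in the proof of Lemma~\ref{lem:nonlinear_hjb}, after which the derivation leading to Theorem~\ref{thm:linear_hjb} applies without modification.

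First I would verify $(\rho, J) \in \mathcal{U}$: smoothness is given, and $\rho > 0$ on $M$ is supplied by Proposition~\ref{prop:existence_uniqueness_inv_measure}, so $(\rho, J) \in W^{k+1}(M) \times W^{k,1}(M)$ with $\rho > 0$. Next I would establish that the Fr\'echet derivative $\mathcal{C}'(\rho, J)$ vanishes on the full Sobolev tangent space $T := \{ (\zeta, G) \in X : \delta G = 0, \ \int_M \zeta \, dx = 0 \}$. For any \emph{smooth} $(\zeta, G) \in T$, compactness of $M$ together with $\rho > 0$ ensures that $(\rho + \varepsilon \zeta, J + \varepsilon G)$ is smooth-admissible for all sufficiently small $\varepsilon$, so the classical first-variation argument, using the minimality of $(\rho, J)$ over the smooth admissible set, forces $\mathcal{C}'(\rho, J)(\zeta, G) = 0$. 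Boundedness of $\mathcal{C}'(\rho, J)$ on $X$ together with density of the smooth elements inside $T$ (addressed below) then extends the identity to all of $T$.

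With $\mathcal{C}'(\rho, J)\big|_T = 0$ in hand, and $(\rho, J)$ automatically a regular point of $\mathcal{H}$ (as shown earlier for every element of $X$), Proposition~\ref{prop:lagrange_multipliers} produces multipliers $(\Phi, \gamma) \in W^{-(k-1)}(M) \times \R$ solving the equations in the proof of Lemma~\ref{lem:nonlinear_hjb}. From this point onward the reasoning of Lemma~\ref{lem:nonlinear_hjb} and Theorem~\ref{thm:linear_hjb}---the substitution $\psi = \exp(-\lambda \Phi)$, the cancellation of the $\|d\psi\|^2$ terms, the elliptic bootstrapping yielding $\psi \in C^\infty(M)$ together with $\psi > 0$, and the resulting formulas for $u$ and $J$---transfers verbatim.

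The main obstacle lies in the density statement used at the end of the second step. Smooth mean-zero functions are dense in the mean-zero closed subspace of $W^{k+1}(M)$ by standard mollification followed by subtraction of the mean (which is a bounded projection). For coclosed $1$-forms in $W^{k,1}(M)$ I would invoke Hodge decomposition on the compact oriented $M$: any $G \in W^{k,1}(M)$ with $\delta G = 0$ decomposes as $G = \delta \beta + h$ with $h$ harmonic (hence smooth) and $\beta$ a $W^{k+1,2}(M)$ $2$-form, and smoothing $\beta$ by mollification while retaining the operator $\delta$ produces a smooth approximant in $T$ that converges in $W^{k,1}(M)$. This finishes the density argument and completes the proof.
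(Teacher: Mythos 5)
Your argument is correct in substance, but it is genuinely more detailed than what the paper does: the paper disposes of the corollary in one sentence, observing that the problem treated in Theorem~\ref{thm:linear_hjb} is a relaxation of Problem~\ref{prob:ergodic_minimization_with_current} (a smooth minimizer is, by density of smooth admissible pairs and continuity of $\mathcal C$ on $\mathcal U$, also a minimizer of the relaxed problem), and that smoothness of the resulting $(\rho,J)$ was already established there. You instead verify the first-order condition directly: smooth two-sided variations inside the affine constraint set give $\mathcal C'(\rho,J)=0$ on smooth elements of the tangent space $T$, and a Hodge-decomposition density argument extends this to all of $T$. Your route makes explicit exactly the approximation step (smooth coclosed forms dense in Sobolev coclosed forms, mean-zero smooth functions dense in mean-zero $W^{k+1}(M)$) that the paper's ``relaxed version'' phrasing sweeps under the rug, at the cost of some extra machinery; the paper's route is shorter but leans on the same density fact implicitly.

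One caveat you should repair: Proposition~\ref{prop:lagrange_multipliers} is stated for a \emph{local extremum} of $\mathcal C$ under the constraint $\mathcal H=0$, whereas what you have established is only stationarity of $\mathcal C'(\rho,J)$ on $\ker \mathcal H'(\rho,J)=T$. Either upgrade your conclusion to ``$(\rho,J)$ is a global minimizer of the relaxed problem'' (which follows from the same density of smooth admissible pairs in the relaxed admissible set together with continuity of $\mathcal C$ on $\mathcal U$, and then lets you cite Proposition~\ref{prop:lagrange_multipliers} verbatim), or replace the citation by the standard factorization fact behind it: a bounded functional on $X$ vanishing on the kernel of the surjective bounded operator $\mathcal H'(\rho,J)$ factors as $z_0^*\circ\mathcal H'(\rho,J)$ for some $z_0^*\in Z^*$, by the open mapping theorem. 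With either fix, the multipliers $(\Phi,\gamma)$ of Lemma~\ref{lem:nonlinear_hjb} exist and the remainder of your argument, including the bootstrapping to $\psi\in C^\infty(M)$ and the formulas for $u$ and $J$, goes through as you describe.
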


\begin{remark}
\label{rem:donsker_varadhan}
Combining Corollaries~\ref{cor:linear_hjb} and~\ref{cor:linear-hjb-necessary}, taking $A = 0$, gives a variational characterization of the principal eigenvalue of an elliptic differential operator $L = \half \Delta + b \nabla - V$, given by
\[ \lambda^{\star} = - \inf_{\substack{(\rho, J) \in C^{\infty} \times \mathfrak X(M) \\ \int_M \rho \ d x = 1 \ \mbox{and} \ \div J = 0}} \mathcal C(\rho, J).\]
This result may be compared to \cite{DonskerVaradhan1975-I}, in which a variational principle is derived for the maximal eigenvalue of an operator $L$ satisfying a maximum principle.
\end{remark}

\subsection{Reversible solution}
\label{sec:reversible_solution}
In the reversible case we can represent the optimally controlled invariant measure in terms of $\psi$ and the uncontrolled invariant measure.
Let $(T(t))_{t \geq 0}$ denote the transition semigroup of a diffusion on $M$. The diffusion is said to be \emph{reversible} if there exists a Borel measure $\nu(dx)$ such that
\[ \int_M (T(t) f)(x) g(x) \ \nu \ d x = \int_M f(x) (T(t) g)(x) \ \nu \ d x \quad \mbox{for all} \ f, g \in C(M), t \geq 0.\]
Other equivalent terminology is that the Markov process is \emph{symmetrizable} or that the invariant measure satisfies \emph{detailed balance} \cite[Section V.4]{IkedaWatanabe1989}.
In case a diffusion is reversible with respect to a measure $\nu$, this measure is an invariant measure for the diffusion.

The following results hold for any control field $u \in \mathfrak X(M)$.
\begin{lemma}
\label{lem:reversible_equivalence}
Let $X$ denote a diffusion with generator given by $L h = \half \Delta h + \langle (b + u), \nabla h \rangle$, $h \in C^2(M)$. The following are equivalent. 
\begin{itemize}
 \item[(i)] $X$ is reversible, with invariant density $\rho_u = \exp(-U)$ for some $U \in C^{\infty}(M)$;
 \item[(ii)] $b + u = - \half \nabla U$ for some $U \in C^{\infty}$;
 \item[(iii)] The long term current density $J$, given by~\eqref{eq:long_term_current_density}, vanishes.
\end{itemize}
\end{lemma}

\begin{proof}
The equivalence of (i) and (ii) is well known, see e.g. \cite[Theorem V.4.6]{IkedaWatanabe1989}.
The equivalence of (ii) and (iii) is then immediate from~\eqref{eq:long_term_current_density}.
\end{proof}

\begin{proposition}
\label{prop:reversible_solution}
Let $\rho_0$ and $J_0 = - \half \nabla \rho_0 + \rho_0 b$ denote the density and current corresponding to the uncontrolled dynamics~\eqref{eq:uncontrolled}.
The following are equivalent.
\begin{itemize}
\item[(i)] The diffusion corresponding to the optimal control $u$ is reversible, with density $\rho = \psi^{2} \rho_0$, where $\psi$ is as in Corollary~\ref{cor:linear_hjb} (normalized such that $\int_M \psi^{2} \rho_0 \ dx = 1$);
\item[(ii)] $J_0 = \rho_0 A$;
\item[(iii)] $b = - \half \nabla U + A$, for some $U \in C^{\infty}(M)$.
\end{itemize}

In particular, if the uncontrolled diffusion is reversible and $A = 0$, then the controlled difussion is reversible and the density admits the expression given under (i).
\end{proposition}


\begin{proof}
Setting $\rho = \psi^{2} \rho_0$, we have 
\begin{align*} J & = -\half \nabla \rho + \rho \left(b -  A + \nabla \log \psi \right)
= - \half \psi^2 \nabla \rho_0 - \rho_0 \psi \nabla \psi + \psi^2 \rho_0 \left( b -  A + \nabla \log \psi \right) \\
& = - \half \psi^2 \nabla \rho_0 + \psi^2 \rho_0 (b -  A)
= \psi^2 (J_0 -  \rho_0 A),
\end{align*}
which establishes the equivalence of (i) and (ii).
Representing the density $\rho_0$ by $\exp(-U)$ and using~\eqref{eq:long_term_current_density} with $u = 0$ gives the equivalence between (ii) and (iii).
\end{proof}

\subsection{Gauge invariance}
\label{sec:gauge_invariance}
For a special choice of $A$, the solution of Problem~\ref{prob:ergodic_minimization_with_current} may be related to the solution corresponding to $A = 0$ in a simple way.

\begin{proposition}
Let $A_0 \in \mathfrak X(M)$. For $\varphi \in C^{\infty}(M)$ and $A = A_0 + \nabla \varphi$ let $(\rho_{\varphi}, J_{\varphi})$ denote the solution of Problem~\ref{prob:ergodic_minimization_with_current} with corresponding solutions $\psi_{\varphi}$ and $u_{\varphi}$ of~\eqref{eq:linear_hjb}. Then, for $\varphi \in C^{\infty}(M)$,
\begin{equation} \label{eq:gauge_invariance} \rho_{\varphi} = \rho, \quad u_{\varphi} = u, \quad \psi_{\varphi} = \exp(\varphi) \psi, \quad \mbox{and} \quad J_{\varphi} = J,
\end{equation}
where $\rho$, $J$, $\psi$ and $u$ denote the solution of Problem~\ref{prob:ergodic_minimization_with_current} and~\eqref{eq:linear_hjb} corresponding to $A = A_0$.
\end{proposition}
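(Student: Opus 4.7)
The plan is to exploit the identity
\[
\int_M \langle d\varphi, J\rangle\,dx \;=\; \int_M \varphi\,\delta J\,dx
\]
(which follows from the definition of $\delta$ as the $L^2$-adjoint of $d$, with no boundary contribution since $M$ is compact without boundary), to show that the cost functional changes by a quantity that vanishes on the constraint set $\{\delta J = 0\}$. This immediately forces the minimizers in $(\rho,J)$ to coincide; the statements about $u$ and $\psi$ then follow by inspection of the formulas in Theorem~\ref{thm:linear_hjb}.

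First I would write out the perturbed cost functional. With $A = A_0 + d\varphi$, the expression~\eqref{eq:costfunction_intermsof_J_and_rho} becomes
\[
\mathcal C_{A}(\rho,J) \;=\; \mathcal C_{A_0}(\rho,J) + \int_M \langle d\varphi, J\rangle\,dx.
\]
Using the adjoint identity above, the extra term equals $\int_M \varphi\,\delta J\,dx$, which vanishes for every $(\rho,J)$ satisfying the constraint $\delta J = 0$. Consequently $\mathcal C_A$ and $\mathcal C_{A_0}$ agree on the feasible set of Problem~\ref{prob:ergodic_minimization_with_current}, so they have the same minimizers: $\rho_\varphi = \rho$ and $J_\varphi = J$. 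Plugging these equalities into~\eqref{eq:u_intermsof_J}, which expresses $u$ entirely in terms of $\rho$, $J$ and the fixed data $f$, yields $u_\varphi = u$.

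For the $\psi$-statement I would use the identity $u = -\lambda A + d(\ln\psi)$ from Theorem~\ref{thm:linear_hjb}. Writing this once with $A = A_0$ and once with $A = A_0 + d\varphi$, and equating the two expressions for $u_\varphi = u$, gives
\[
d\bigl(\ln \psi_\varphi\bigr) \;=\; d\bigl(\ln \psi\bigr) + \lambda\,d\varphi,
\]
so $\ln \psi_\varphi - \ln\psi - \lambda\varphi$ is a (locally constant, hence) constant function on the connected manifold $M$, i.e.\ $\psi_\varphi = c\,\exp(\lambda\varphi)\,\psi$ for some $c>0$. Since $\psi$ is a positive eigenfunction of the linear operator $H - W$ in~\eqref{eq:linear_hjb}, it is only determined up to a positive multiplicative constant; fixing the same normalization convention for $\psi_\varphi$ as for $\psi$ gives $c=1$ and hence $\psi_\varphi = \exp(\lambda\varphi)\,\psi$.

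The only real subtlety is the normalization point: one must observe that multiplying $\psi$ by a positive scalar leaves all physical quantities ($u$, $\rho$, $J$) unchanged, since they depend on $\psi$ only through $d(\ln\psi)$ and (in the symmetrizable case) through $\psi^2 \rho_0$ with the probability normalization. This ambiguity is precisely what is absorbed into the stated relation. The rest of the argument is a straightforward unwinding of the formulas, and no analytic obstacle is involved beyond the integration-by-parts identity that makes the gauge term disappear on the constraint set.
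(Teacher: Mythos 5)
Your proof is correct, and it takes a somewhat different (and more conceptual) route than the paper, whose proof is left as a ``straightforward computation'' --- i.e.\ direct verification that the transformed quantities $\rho$, $u$, $J$, $e^{\lambda\varphi}\psi$ satisfy the first-order conditions \eqref{eq:hjb_rho_J}/\eqref{eq:linear_hjb} with $A=A_0+d\varphi$, using for instance $\Phi_\varphi=\Phi-\varphi$ so that $A+d\Phi$ is unchanged. You instead observe that $A$ enters \eqref{eq:costfunction_intermsof_J_and_rho} only through $\int_M\langle A,J\rangle\,dx$, so the gauge change adds $\int_M\langle d\varphi,J\rangle\,dx=\int_M\varphi\,\delta J\,dx$, which vanishes identically on the feasible set $\{\delta J=0,\ \int_M\rho\,dx=1\}$ (which itself does not depend on $A$); hence the two variational problems have literally the same (local) minimizers, giving $\rho_\varphi=\rho$, $J_\varphi=J$, and then $u_\varphi=u$ from \eqref{eq:u_intermsof_J}, which involves only $\rho$, $J$ and $f$. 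This buys a cleaner explanation of \emph{why} gauge invariance holds (the gauge term is a null Lagrangian on the constraint set), whereas the paper's computation also makes explicit that the Lagrange multipliers transform as $\Phi\mapsto\Phi-\varphi$ with the same eigenvalue $\mu$. Two small points you handle correctly but that are worth keeping: the conclusion $\ln\psi_\varphi-\ln\psi-\lambda\varphi=\mathrm{const}$ uses connectedness of $M$ (implicit throughout the paper via uniqueness of the invariant measure), and $\psi$ in Theorem~\ref{thm:linear_hjb} is only determined up to a positive multiplicative constant, so the identity $\psi_\varphi=e^{\lambda\varphi}\psi$ holds after fixing a common normalization --- exactly the caveat you state, and consistent with the word ``essentially'' in the paper's remark following the proposition.
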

\begin{proof}
This is a matter of straightforward computation.
\end{proof}

In other words, the solution of Problem~\ref{prob:ergodic_minimization_with_current} depends (essentially) only on the equivalence class of $A$, under the equivalence relation $A \sim B$ if and only if $A = B + \nabla \varphi$ for some $\varphi \in C^{\infty}(M)$.

\begin{remark}
A standard way in physics to obtain gauge
invariant differential operators is to replace the gradients with `long' derivatives. 
This phenomenon also occurs in the linear Bellman equation.
In differential form notation (interpreting $A$ as differential form; see Remark~\ref{rem:A_actually_diff_form}) the linear operator on the left hand side of~\eqref{eq:linear_hjb} can be written as 
\[ H \psi  = \half \star \left( d -  A \wedge \right) \star \left( d -  A \wedge \right)\psi  + \star \left( f \star (d -  A \wedge)\right) \psi  -  V \psi.\]
The operator $\psi \mapsto d \psi -  A \wedge \psi$ is called a \emph{`long' derivative} operator.
This result is easily verified using the standard relations
\[ \star \left( \alpha \wedge \star \beta \right) = \langle \alpha, \beta \rangle, \quad \star d \star d\phi = \Delta \phi, \quad \star d \star \alpha = - \delta \alpha,\]
for $\alpha, \beta \in \Lambda^1(M)$ and $\phi \in C^{\infty}(M)$.

See also the directly related expression \cite[Equation (31)]{BaratoChetrite2015}, where a similar twisted second order generator occurs in relation to a tilting of a Markov process to accommodate for currents.
\end{remark}

\section{Fixed density}
\label{sec:fixed_density}
In this section we consider the problem of fixing the density function $\rho$, and finding a force $u$ that obtains this density function, at minimum cost. 
Let $\rho \in C^{\infty}(M)$ be fixed, with $\rho > 0$ on $M$ and $\int_M \rho \ d x = 1$. Then for some constant $c_{\rho}$ we have 
\[ \mathcal C(\rho, u) = c_{\rho} + \int_M \left\{ \frac 1 {2 } \|u\|^2 + \langle  A, u \rangle \right\} \rho \ d x. \] 
Therefore we will consider the following problem.
\begin{problem} 
\label{prob:fixed_density}
Minimize $\mathcal C(u)$ over $\mathfrak X(M)$, 
subject to the constraint $L_u^{\star} \rho = 0$, 
where $\mathcal C(u)$ is defined by
\[ \mathcal C(u) = \int_M \left\{ \frac 1 {2} \|u\|^2 + \langle  A, u \rangle \right\} \rho \ d x.\]
\end{problem}

The corresponding problem in terms of the curent density is the following. As for $\mathcal C(u)$, terms that do not depend on $J$ are eliminated from the cost functional.
\begin{problem}
\label{prob:fixed_density_in_terms_of_J}
Minimize $\mathcal C(J)$ over $\mathfrak X(M)$, subject to the constraint $\div J = 0$, where $\mathcal C(J)$ is defined by
\begin{equation} \label{eq:cost_fixed_density_in_J} \mathcal C(J) = \int_M \left( \left \langle - b + \half \nabla \log \rho +  A, J \right \rangle + \frac 1 { 2 \rho} \|J\|^2 \right) \ d x. \end{equation}
\end{problem}

By a standard variational argument we obtain the following result, which states that  a relaxed version of Problem~\ref{prob:fixed_density_in_terms_of_J} may be transformed into an elliptic PDE. Essentially, this is obtained through variation of the Lagrangian functional 
\[\mathcal L(J, \Phi) = \mathcal C(J) + \int_M \Phi \div  J \ d x.\]

\begin{theorem}
\label{thm:solution_fixed_density}
Suppose $J \in \mathfrak X(M)$ is a local extremum of $\mathcal C(J)$ given by~\eqref{eq:cost_fixed_density_in_J} under the constraint that $\div J = 0$. Then there exists a $\Phi \in C^{\infty}(M)$ such that
\begin{equation} \label{eq:linear_pde_fixed_density} \Delta \Phi + \left \langle \nabla \log  \rho, \nabla \Phi \right\rangle =  \frac 1 {\rho} \left( \half \Delta \rho  - \div (\rho (b -   A) ) \right).\end{equation}
For this $\Phi$, $J$ is given by 
\[ J = \rho f - \half  \nabla \rho + \rho  (\nabla \Phi -  A),\]
and the corresponding control field $u$ is given by 
\[ u =  \nabla \Phi - A.\]
\end{theorem}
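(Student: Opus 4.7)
The strategy is parallel to the one used in Section~\ref{sec:hjb}, but considerably simpler because only $J$ is varied and the constraint $\delta J = 0$ is already linear in $J$. I would cast the problem in the abstract Banach-space setting of Proposition~\ref{prop:lagrange_multipliers}, relaxing the state space from $\Lambda^1(M)$ to $W^{k,1}(M)$ for some $k > n/2$, with constraint space $Z_1 = \{\delta\beta : \beta \in W^{k,1}(M)\}$ as before. Since $\rho \in C^{\infty}(M)$ is fixed and bounded away from zero, the quadratic functional $\mathcal C(J)$ in~\eqref{eq:cost_fixed_density_in_J} is continuously Fréchet differentiable, with derivative in direction $G \in W^{k,1}(M)$ given by
\[
\mathcal C'(J)[G] \;=\; \int_M \left\langle -\tfrac{1}{\lambda}f + \tfrac{1}{2\lambda\rho}d\rho + A + \tfrac{1}{\lambda\rho}J,\; G \right\rangle d x,
\]
while the constraint map $J \mapsto \delta J$ has derivative $G \mapsto \delta G$. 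Any $J \in W^{k,1}(M)$ is a regular point of this constraint exactly as in Section~\ref{sec:hjb}.

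Applying Proposition~\ref{prop:lagrange_multipliers} and using the identification of $Z_1^*$ from Lemma~\ref{lem:Z1_dual}, I obtain a Lagrange multiplier $\Phi$ (a priori a distribution in $W^{-(k-1)}(M)$, defined up to a closed form) such that, for all test fields $G$,
\[
\int_M \left\langle -\tfrac{1}{\lambda}f + \tfrac{1}{2\lambda\rho}d\rho + A + \tfrac{1}{\lambda\rho}J + d\Phi,\; G \right\rangle d x = 0,
\]
after integration by parts against $G$ via $\int_M \Phi\,\delta G\,dx = \int_M \langle d\Phi, G\rangle\,dx$. This identity, holding for all admissible $G$, yields pointwise (in the weak sense)
\[
J \;=\; \rho f - \tfrac{1}{2} d\rho - \lambda\rho(A + d\Phi),
\]
which is the stated formula for $J$. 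The corresponding control field $u = -\lambda(A + d\Phi)$ then follows immediately from~\eqref{eq:u_intermsof_J} by direct substitution.

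To obtain the PDE, I would substitute the expression for $J$ into the constraint $\delta J = 0$. Using the product rule $\delta(\rho\alpha) = \rho\,\delta\alpha - \langle d\rho,\alpha\rangle$ for a 1-form $\alpha$ together with the identities $\delta d\rho = -\Delta\rho$ and $\delta d\Phi = -\Delta\Phi$, one finds
\[
\delta(\rho f) + \tfrac{1}{2}\Delta\rho - \lambda\delta(\rho A) + \lambda\rho\Delta\Phi + \lambda\langle d\rho, d\Phi\rangle = 0,
\]
which is precisely~\eqref{eq:linear_pde_fixed_density} after dividing by $\lambda\rho > 0$.

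The remaining step, and the only place that deserves care, is the regularity statement $\Phi \in C^{\infty}(M)$. The left-hand side of~\eqref{eq:linear_pde_fixed_density} is the second-order elliptic operator $\Phi \mapsto \Delta\Phi + \langle\rho^{-1}d\rho, d\Phi\rangle$ with smooth coefficients (since $\rho \in C^{\infty}(M)$ and $\rho > 0$), while the right-hand side is smooth because $\rho, f, A$ are. The same bootstrapping argument employed in the proof of Theorem~\ref{thm:linear_hjb}—elliptic regularity (\cite[Proposition 5.1.6]{taylor1996}) applied iteratively, followed by Sobolev embedding (Lemma~\ref{lem:sobolev_embedding})—upgrades $\Phi$ from its initial distributional regularity to $\Phi \in C^{\infty}(M)$, and then the formulas for $J$ and $u$ are smooth as well. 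I expect the main (mild) obstacle to be bookkeeping the equivalence class ambiguity in $\Phi$ inherited from Lemma~\ref{lem:Z1_dual}; this is harmless because only $d\Phi$ enters the final formulas.
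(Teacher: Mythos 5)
Your proposal is correct and follows essentially the route the paper itself indicates: the paper only sketches this proof (``by an analogous argument as in Section~\ref{sec:hjb}'', via variation of the Lagrangian $\mathcal L(J,\Phi)=\mathcal C(J)+\int_M \Phi\,\delta J\,dx$), and your write-up is precisely that argument carried out rigorously in the Sobolev/Lagrange-multiplier framework of Section~\ref{sec:hjb}, with the correct Fr\'echet derivative, the correct first-order condition yielding $J=\rho f-\tfrac12 d\rho-\lambda\rho(A+d\Phi)$, and the PDE~\eqref{eq:linear_pde_fixed_density} recovered from $\delta J=0$ together with the standard elliptic bootstrapping for smoothness of $\Phi$.
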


%
%

\begin{remark}
It appears that the result of Theorem~\ref{thm:solution_fixed_density} can be thought of as a gauge invariant extension of the characterization~\eqref{eq:gartner} of the Donsker-Varadhan functional by G\"artner \cite{Gartner1977}. Taking $A = 0$ and comparing with~\eqref{eq:gartner} we may rephrase the Donsker-Varadhan rate functional~\eqref{eq:donsker-varadhan-rate-function}, as
\[ I(\mu) = \inf_{\substack{u \in \mathfrak X(M)\\ L^{\star}_u \rho = 0}} \half \int_M \| u\|^2\  \rho \ dx \]
for $\mu$ absolutely continuous with density $\rho$. (Note that strictly speaking, we need to be careful since we have only obtained Theorem~\ref{thm:solution_fixed_density} under the condition that a local minimum exists.)
\end{remark}

\begin{remark}[Solution in the reversible case]
\label{rem:solution_fixed_density_special_case}
If $b -  A = - \half \nabla  U$ for some $U \in C^{\infty}(M)$, it may be checked that $\Phi =  \frac 1 {2 } \left( \ln \rho + U \right)$ solves~\eqref{eq:linear_pde_fixed_density}, so that the optimal control field
$u = \half \nabla (\ln \rho) - b$. In other words, the optimal way to obtain a particular density function $\rho$ if $b -  A$ is in `gradient form' is by using a control $u$ so that the resulting force field $b + u$ is again in gradient form, $b + u = \half \nabla (\ln \rho)$, resulting in reversible dynamics; see also Section~\ref{sec:reversible_solution}.
\end{remark}

\begin{example}[Circle]
On $S^1$ every differential 1-form $\beta$, and in particular $\beta = b - A$,  may be written as $\beta = -\half \ d U + \half k \ d \theta$, where $\theta$ represents the polar coordinate function, $U \in C^{\infty}(S^1)$ and $k \in \R$; see e.g. \cite[Example 4.14]{Warner1983}. Equation~\eqref{eq:linear_pde_fixed_density} then reads
\[ \Phi''(\theta) + \frac 1 {\rho} \Phi'(\theta) \rho'(\theta) = \frac 1 {2  \rho} \left( \rho''(\theta) - \frac{d}{d\theta}\left(- \rho(\theta) U'(\theta) + k \rho \right) \right).\]
Based on Remark~\ref{rem:solution_fixed_density_special_case}, we try a solution of the form
$\Phi = \frac 1 {2 } (\ln \rho + U - \varphi)$. Inserting this into the differential equation, we obtain for $\varphi$ the equation
\[ \varphi''(\theta) + \gamma(\theta) \varphi'(\theta) = k \gamma(\theta),\]
where $\gamma(\theta) = \frac{d}{d\theta} \ln \rho(\theta) = \frac 1{\rho(\theta)} \rho'(\theta)$.

Up to an arbitrary additive constant (which we put to zero), there exists a unique periodic solution $\varphi$ to this differential equation, given by
\[ \varphi(\theta) = \frac{k \left( \theta \int_0^{2 \pi} \rho(\xi)^{-1} \ d \xi - 2 \pi \int_0^{\theta} \rho(\xi)^{-1} \ d \xi \right)}{\int_0^{2 \pi} \rho(\xi)^{-1} \ d \xi }, \quad \theta \in [0, 2 \pi]. \]
\end{example}

\begin{remark}[Gauge invariance]
As in Section~\ref{sec:gauge_invariance}, it is straightforward to check that a solution to Problem~\ref{prob:fixed_density_in_terms_of_J} for $A_{\varphi} = A_0 + \nabla  \varphi$ is given by
\[ \Phi_{\varphi} = \Phi + \varphi, \quad \rho_{\varphi} = \rho, \quad u_{\varphi} = u, \quad J_{\varphi} = J,\]
in terms of the solution $(\Phi, \rho, u, J)$ corresponding to the gauge field $A_0$.
\end{remark}

%
%
%
\section{Fixed current density}
\label{sec:fixed_current_density}
In this section we approach the problem of minimizing the average cost, under the constraint that $J$ is fixed. In light of the remark just below~\eqref{eq:long_term_current_density}, it will be necessary to demand that $\div J = 0$, otherwise we will not be able to obtain a solution. 
Hence in the remainder of this section let $J \in \mathfrak X(M)$ satisfying $\div J = 0$ be fixed. 
By~\eqref{eq:u_intermsof_J}, we may express $u$ in terms of $J$ and $\rho$ by $u = -b + \frac 1 {\rho} \left( J + \half \nabla \rho \right)$. Note that by Lemma~\ref{lem:fokker_planck_equivalence}, the Fokker-Planck equation~\eqref{eq:fokker_planck} is satisfied. This leads to the following problem.



\begin{problem}
\label{prob:fixed_current}
Minimize $\mathcal C(\rho)$ subject to the constraint $\int_M \rho \ d x = 1$, where 
\begin{equation} \label{eq:cost_fixed_current} \mathcal C(\rho) = \int_M \left( V + \frac 1{2 } \left\| -b  + \frac 1{\rho} \left( J + \half \nabla \rho \right) \right\|^2  \right) \rho \ d x.\end{equation}
\end{problem}

\begin{remark}
The constraint $\rho \geq 0$ on $M$ does not need to be enforced, since if we find $\rho$ solving Problem~\ref{prob:fixed_current} without this constraint, we may compute $u$ by~\eqref{eq:u_intermsof_J}. Then $\rho$ satisfies $L_u^{\star} \rho = - \div J = 0$ by Lemma~\ref{lem:fokker_planck_equivalence}, so by Proposition~\ref{prop:existence_uniqueness_inv_measure} and the constraint $\int_M \rho \ d x = 1$, it follows that $\rho > 0$ on $M$.
\end{remark}

\begin{remark}
Note that by Lemma~\ref{lem:ergodic_gaugefield_current}, the contribution of $A$ is determined once we fix $J$. Therefore we may put $A = 0$ in the current optimization problem.
\end{remark}

Necessary conditions for the solution of Problem~\ref{prob:fixed_current} may be obtained rigorously in a similar manner as in Section~\ref{sec:hjb}, to obtain the following result.

\begin{theorem}
\label{thm:solution_fixed_current}
Suppose $\rho \in C^{\infty}(M)$ minimizes $\mathcal C(\rho)$ given by~\eqref{eq:cost_fixed_current} under the constraint that $\int_M \rho \ d x = 1$. Then there exists a $\mu \in \R$ such that
\begin{equation}
\label{eq:yermakov}
\half \Delta \phi - (W +  \mu) \phi = - \frac{\|J\|^2}{2 \phi^3},
\end{equation}
holds, where $\phi = \sqrt \rho$ and $W =  V + \half \|b\|^2 + \half \div b$. 
\end{theorem}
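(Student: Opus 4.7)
The plan is to apply the Lagrange multiplier framework of Section~\ref{sec:hjb} (specifically Proposition~\ref{prop:lagrange_multipliers}) to the scalar-constrained problem, exactly as indicated in the paragraph preceding the theorem. Working over the Sobolev space $\mathcal{P} = \{\rho \in W^{k+1}(M) : \rho > 0\}$ with $k > n/2$, the constraint map $\rho \mapsto \int_M \rho\,dx - 1$ is a bounded linear functional onto $\mathbb{R}$ (in particular surjective), so every $\rho \in \mathcal{P}$ is regular. By Proposition~\ref{prop:lagrange_multipliers}, a minimizer satisfies $\mathcal{C}'(\rho) = \mu \int_M \cdot\,dx$ for some $\mu \in \R$, i.e., the Euler--Lagrange equation for the integrand of~\eqref{eq:cost_fixed_current}.

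First, I would expand the integrand and then substitute $\phi = \sqrt{\rho}$, so $d\rho = 2\phi\,d\phi$ and $J + \half d\rho = J + \phi\,d\phi$. The nonlinear term cleans up to
\[
\frac{1}{\rho}\left\|J + \tfrac{1}{2} d\rho\right\|^{2} = \frac{\|J\|^{2}}{\phi^{2}} + \frac{2}{\phi}\langle J, d\phi\rangle + \|d\phi\|^{2},
\]
and the full cost becomes a functional in $\phi$ whose gradient piece is the clean Dirichlet integral $\frac{1}{2\lambda}\|d\phi\|^{2}$. The normalization constraint rescales to $\int_M \phi^{2}\,dx = 1$, so the Lagrange term contributes $2\mu\phi$ to the Euler--Lagrange equation.

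The main computation is carrying out the variation in $\phi$. Direct differentiation gives polynomial and $1/\phi^{3}$ terms, while the gradient couplings contribute via integration by parts, using $\delta(\phi\, f) = \phi\, \delta f - \langle d\phi, f\rangle$ and $\delta(J/\phi) = \phi^{-2}\langle d\phi, J\rangle$ (where the constraint $\delta J = 0$ is essential for the second identity). A pleasant cancellation then occurs: the $\langle f, d\phi\rangle$ and $\phi^{-2}\langle J, d\phi\rangle$ pieces generated by direct differentiation are exactly annihilated by the corresponding boundary-integrated terms. The surviving equation is
\[
-\tfrac{1}{2}\Delta\phi + \bigl(\lambda V + \tfrac{1}{2}\|f\|^{2} - \tfrac{1}{2}\delta f\bigr)\phi - \frac{\|J\|^{2}}{2\phi^{3}} = \lambda\mu\,\phi,
\]
after multiplying through by $\lambda$ and absorbing a factor of $2$ into $\mu$; with $W = \lambda V + \tfrac{1}{2}\|f\|^{2} - \tfrac{1}{2}\delta f$ and a sign convention on $\mu$, this is exactly~\eqref{eq:yermakov}.

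The anticipated obstacle is not the computation itself but the regularity issue: unlike Theorem~\ref{thm:linear_hjb}, the substitution $\phi = \sqrt{\rho}$ produces a genuinely nonlinear PDE (of Ermakov/Yermakov type), so the smoothness bootstrap used for the linearized HJB equation does not apply directly. However, the statement of Theorem~\ref{thm:solution_fixed_current} assumes at the outset a smooth minimizer $\rho \in C^{\infty}(M)$ with $\rho > 0$ (this positivity being guaranteed by Proposition~\ref{prop:existence_uniqueness_inv_measure} via the Fokker--Planck equation once $u$ is recovered from $(\rho, J)$). Consequently $\phi = \sqrt{\rho} \in C^{\infty}(M)$ is automatic, and the proof reduces to the formal variational calculation outlined above, carried out rigourously on $\mathcal{P}$ as in Section~\ref{sec:hjb}.
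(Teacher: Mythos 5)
Your proposal is correct and follows essentially the same route as the paper: a Lagrange multiplier for the normalization constraint, the Euler--Lagrange computation with integration by parts using $\delta J = 0$, and the substitution $\phi = \sqrt{\rho}$. The only difference is cosmetic---you change variables to $\phi$ before varying, whereas the paper varies in $\rho$ to obtain~\eqref{eq:nonlinear_pde_fixed_current} and substitutes $\phi = \sqrt{\rho}$ afterwards---and since $\mu$ is an arbitrary real, your rescaling and sign convention for the multiplier are immaterial.
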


\begin{remark}
Equation~\eqref{eq:yermakov} is known (at least in the one-dimensional case) as \emph{Yermakov's equation} \cite{Polyanin2003}.
\end{remark}

Instead of proving Theorem~\ref{thm:solution_fixed_current} rigorously (which may be done analogously to Section~\ref{sec:hjb}) we provide an informal derivation, which we hope provides more insight to the reader. We introduce the Lagrangian $\mathcal L : C^{\infty}(M) \times \R \rightarrow \R$ by
\begin{align*}
 \mathcal L(\rho, \mu) 
& = \int_M \left( V + \frac 1{2 } \left\|-b + \frac 1{\rho} \left( J + \half \nabla  \rho \right) \right\|^2  \right) \rho \ d x + \mu \left( \int_M \rho \ d x - 1 \right).
\end{align*}

Varying $\mathcal L(\rho,\mu)$ with respect to $\rho$ in the direction $\zeta \in C^{\infty}(M)$ gives
\begin{align*}
& \mathcal L'(\rho,\mu) \zeta \\
& = \int_M \left( V + \frac 1{2 } \left\| -b + \frac 1{\rho} \left( J + \half \nabla \rho \right) \right\|^2 + \mu \right) \zeta +  \left \langle\frac 1{\rho} \left( J + \half \nabla \rho \right) - b,  \frac 1 {2 \rho} \nabla \zeta- \frac 1 {\rho^2} \left( J+ \half \nabla \rho \right) \zeta  \right \rangle \rho \ d x \\
& = \int_M \left(  V +  \mu + \half \|b\|^2 - \frac 1 {2 \rho^2} \|J\|^2 + \half \div b + \frac 1 {8 \rho^2} \|\nabla \rho\|^2 - \frac 1 {4 \rho} \Delta \rho \right) \zeta \ d x,
\end{align*}
where 
we used $\div J=0$.
We require that for any direction $\zeta$ the above expression equals zero, which is the case if and only if
\begin{equation} \label{eq:nonlinear_pde_fixed_current} - \frac 1 {4 \rho} \Delta \rho + \frac 1 {8 \rho^2} \|\nabla \rho\|^2 - \frac 1 {2 \rho^2} \|J\|^2 + W +  \mu = 0.\end{equation}
Note that we need to solve this equation for both $\rho$ and $\mu$, in combination with the constraint that $\rho > 0$ on $M$ and $\int_M \rho \ d x = 1$. 
By substituting $\phi = \sqrt{\rho}$, equation~\eqref{eq:nonlinear_pde_fixed_current} transforms into the equation~\eqref{eq:yermakov}.

We may then compute the cost corresponding to $\rho = \phi^2$ as
\begin{align}
\label{eq:value_fixed_current} 
\nonumber \mathcal C(\rho) & = \int_M \left( V + \frac 1{2 } \left\|-b + \frac 1{\rho} \left( J + \half \nabla  \rho \right) \right\|^2  \right) \rho \ d x \\
\nonumber & = \int_M \frac 1 { 4 } \Delta \rho - \frac 1 {8  \rho} \|\nabla \rho\|^2 + \frac 1 {2  \rho } \|J\|^2 - \frac {\rho} {2 } \|b\|^2 -  \frac {\rho} {2 } \div b - \mu \rho + \frac {\rho}{2 } \left\|-b + \frac 1{\rho} \left( J + \half \nabla \rho \right) \right\|^2  \ d x \\
& = \int_M \frac 1 { 4 } \Delta \rho - \frac {\rho}{2 } \div b - \mu \rho -  \langle b, J \rangle - \rho \left\langle b, \frac 1 {2 \rho} \nabla \rho \right\rangle + \frac 1 {\rho} \langle J, \half \nabla \rho \rangle \ d x \\
\nonumber & = - \left( \mu  + \int_M \langle b, J \rangle \ d x \right),
\end{align}
where we used~\eqref{eq:nonlinear_pde_fixed_current} in the first equality, and the following relations for the last equality:
\begin{align*}
\int_M \Delta \rho \ d x & = 0, \quad & \int_M \rho (\div b) \ d x & = - \int_M \langle \nabla \rho, b \rangle \ d x, \\
\int_M \frac 1 {\rho} \langle J, \nabla \rho \rangle \ d x & = - \int_M (\div J) \ln \rho  \ d x =  0, \quad & \int_M \rho \ d x  & = 1.
\end{align*}

We can only influence the first term in~\eqref{eq:value_fixed_current} by choosing $\rho$ or $\mu$, so we see that minimizing $\mathcal C$ therefore corresponds to finding the largest value of $\mu$ such that~\eqref{eq:nonlinear_pde_fixed_current}, or, equivalently,~\eqref{eq:yermakov}, admits a solution.

\subsection{Reversible solution -- stationary Schr\"odinger equation}
\label{sec:schrodinger}

In this section we consider the special case of the above problem for zero current density, $J = 0$. By Lemma~\ref{lem:reversible_equivalence}, this is equivalent to $u + b = - \frac 1 2 \nabla \Psi$ for some unknown $\Psi \in C^{\infty}(M)$, with $\rho = \exp(-\Psi)$. In other words, we demand the net force field (including the control) to be in gradient form, and the corresponding diffusion to be reversible; see Section~\ref{sec:reversible_solution}.

In this case~\eqref{eq:yermakov} transforms into the linear eigenvalue problem,
\begin{equation}
\label{eq:reversible_linear_pde} 
\half \Delta \phi - W \phi = \mu \phi.
\end{equation}
This is intriguing since this is in fact a time independent Schr\"odinger equation for the square root of a density function, analogous to quantum mechanics; even though our setting is entirely classical. See also \cite[equation (13)]{Chernyak2013} and the discussion following it.

By~\eqref{eq:value_fixed_current},
we are interested in the largest value of $\mu$ so that~\eqref{eq:reversible_linear_pde} has a solution $\phi$.
The optimal control field is then given by $u = \frac 1 {\phi} \nabla \phi - b$.

\begin{remark}
It is straightforward to check that if $b = - \half \nabla  U$ for some $U \in C^{\infty}(M)$, then $\phi = \exp(-\half U)$ satisfies~\eqref{eq:reversible_linear_pde} with $V = 0$, $\mu = 0$, resulting in $u = 0$. This corresponds to the intuition that, if $b$ is already a gradient, no further control is necessary to obtain a reversible invariant measure.
\end{remark}
\begin{remark}
We may also compare the case $b = -\half \nabla U$ with the result of Section~\ref{sec:reversible_solution}. There we obtained that, in case $A = 0$ and $b = -\half \nabla U$, the optimization problem for unconstrained $J$ resulted in a reversible solution. In other words, the constraint $J = 0$ does not need to be enforced, and the solution of this section should equal the solution obtained in Proposition~\ref{prop:reversible_solution}. Apparently, with $\psi$ as in Proposition~\ref{prop:reversible_solution}, we have that $\phi^2 = \psi^2 \exp(-U)$.
\end{remark}

\section{Discussion}
\label{sec:discussion}
In this paper we showed how stationary long term average control problems are related to eigenvalue problems (for the unconstrained problem and the problem constrained to a reversible solution, Sections~\ref{sec:hjb} and~\ref{sec:schrodinger}), elliptic PDEs (for the problem with fixed density, Section~\ref{sec:fixed_density}) or a non-linear eigenvalue problem (for the problem with fixed current density, Section~\ref{sec:fixed_current_density}). For this we fruitfully used the representation of an optimal control field $u$ in terms of the density function $\rho$ and the current density $J$.

The theory on existence of solutions and spectrum of operators is classical and we refer the interested reader to e.g. \cite{Evans2010, Taylor1996, Warner1983}. 

An interesting problem for further research is the following. One may ask the question whether we may obtain solutions when we constrain a certain flux $\int_M \langle A, J \rangle \ d x$ (see Section~\ref{sec:flux}) to a given value. In this case, one may use $\widetilde A = \mu A$ as a Lagrange multiplier and use the results of Section~\ref{sec:hjb} (for constrained flux) and Section~\ref{sec:fixed_density} (for constrained flux and density) to obtain necessary conditions. 

Apparently, in the case of a diffusion, density and current density completely characterize the ergodic behaviour. It seems an interesting problem to formulate a general theory which extracts from a given Markov processes the precise quantities which specify the long term behaviour of the process. 

\section*{Acknowledgements}
We kindly thank the referee for various important and useful suggestions for improvement and pointing out a few related references.

\appendix

\section{Flux}

\label{app:flux}
In this section we will give a natural interpretation of the term $\int_M \langle A,J \rangle \ d x$, namely as the flux of $J$ through a cross-section $\alpha$, or equivalently, the long term average intersection index of the stochastic process $(X(t))_{t \geq 0}$ with a cross-section. The section motivates the gauge field $A$ in the cost function. The remainder of this paper does not refer to this section.
For background reading in differential geometry, see \cite{Arnold1989, Warner1983}. See also \cite{Chernyak2009, Chernyak2013} where the considerations below are discussed from a physics perspective. 

Let $M$ be a compact, oriented, Riemannian manifold of dimension $n$.
Recall the notion of a \emph{singular $p$-chain in $M$ (with real coefficients)} as a finite linear combination $c = \sum a_i \sigma_i$ of smooth $p$-simplices $\sigma_i$ in $M$ where the $a_i$ are real numbers. 
Let $S_p(M;\R)$ denote the real vector space of singular $p$-chains in $M$. On $S_p(M;\R)$, $p \in \Z$, $p \geq 0$, are defined boundary operators $\partial_p : S_p(M;\R) \rightarrow S_{p-1}(M;\R)$. The \emph{$p$-th singular homology group of $M$ with real coefficients} is defined by 
\[ \mathcal H_p(M;\R) = \ker \partial_p / \im \partial_{p+1}.\] Elements of $\ker \partial_p$ are called $p$-cycles, and elements of $\im \partial_{p+1}$ are called $p$-boundaries.
The \emph{deRham cohomology classes} $\mathcal H^p_{\deRham}(M;\R)$ are defined, for $0 \leq p \leq m = \dim(M)$ as
\[ \mathcal H^p_{\deRham}(M) = \ker d_p / \im d_{p-1},\] where $d_p : \Lambda^p(M) \rightarrow \Lambda^{p+1}(M)$ denotes exterior differentiation.

Let $\alpha$ be a $p$-cycle. The functional $p_{\alpha} \in \Lambda^p(M) \rightarrow \int_{\alpha} \beta \in \R$ depends, by Stokes' theorem, only on the homology class $[\alpha]$ of $\alpha$, and the deRham cohomology class $[\beta]$ of $\beta$. An element $[\alpha] \in \mathcal H_p(M;\R)$ may therefore be considered an element of $\left(\mathcal H^p_{\deRham}(M)\right)^{\star}$. 
Now let $M$ be oriented and of dimension $n$. The mapping $q_{\beta}: [\gamma] \in \mathcal H^p_{\deRham}(M) \rightarrow \int_M \beta \wedge \gamma \in \R$ is an element of $\left(\mathcal H^p_{\deRham}(M)\right)^{\star}$. Poincar\'e duality states that the mapping $[\beta] \in \mathcal H^{m-p}_{\deRham}(M) \rightarrow q_{\beta} \in \left(\mathcal H^p_{\deRham}(M) 
\right)^{\star}$ is an isomorphism for compact $M$, i.e. $\mathcal H^{m-p}_{\deRham}(M)  \cong \left( \mathcal H^{p}_{\deRham}(M)\right)^{\star}$. Therefore, for compact, oriented $M$, we have
\[ \mathcal H_p(M;\R) \cong \left(\mathcal H^p_{\deRham}(M)\right)^{\star} \cong \mathcal H^{m-p}_{\deRham}(M).\]
Since an equivalence class in $\mathcal H^{m-p}_{\deRham}(M)$ has a unique harmonic representative, we conclude that for a $p$-cycle $\alpha$, there exists a unique harmonic $r_{\alpha} \in \Lambda^{m-p}(M)$ such that 
\[ \int_{\alpha} \beta = p_{\alpha}(\beta) = \int_M r_{\alpha} \wedge \beta, \quad [\beta] \in \mathcal H^p_{\deRham}(M).\]

In particular, for $p = m - 1$, we may interpret $\int_{\alpha} \star J$ (with $\star$ the \emph{Hodge star operator}) as the flux of $J$ through $\alpha$. This quantity may further be interpreted as the long term average intersection index of the stochastic trajectory $(X(t))_{t \geq 0}$ with respect to $\alpha$, i.e. the long term average of the number of intersections (with $\pm 1$ signs depending on the direction); see e.g. \cite[Section 0.4]{GriffithsHarris1994}. Specializing the above result to this situation, we obtain the following proposition.

\begin{proposition}
For a given $(n-1)$-cycle $\alpha$, there exists a unique harmonic $A \in \Lambda^1(M)$, which depends only on the singular homology class $[\alpha] \in \mathcal H_{m-1}(M;\R)$ of $\alpha$, such that $\int_{\alpha} \star J = \int_M \langle A, J \rangle \ d x$ for all $J \in \Lambda^1(M)$ satisfying $\delta J = 0$.
\end{proposition}

\section{Derivation of expression for long term average of current density}
\label{app:long_term_current}
In the physics literature (see e.g. \cite{Chernyak2009}), the current density is defined formally as
\begin{equation} \label{eq:J_physics} J^i_t(x) = \frac 1 t \int_0^t \dot X_s^i \delta(X_s - x) \ d s,\end{equation}
for $x \in M$, where $\delta$ is the Dirac delta function. 
We will derive an alternative expression for this quantity, using the model~\eqref{eq:controlled} for the dynamics.
Note that~\eqref{eq:J_physics} formally defines a vector field that acts on functions $f \in C^{\infty}(M)$ as
\begin{align*} J_t f(x) & = \frac 1 t \int_0^t \dot X_s^i \delta(X_s - x) \partial_i f(x) \ d s = \frac 1 t \int_0^t \dot X_s^i \delta(X_s - x) \partial_i f(X_s) \ d s = \frac 1 t \int_0^t \delta(X_s -x) \partial_i f(X_s) \circ d X_s^i.
\end{align*}
The $\delta$-function is still problematic. We may however formally compute the $L^2(M,g)$ inner product of the above expression with any $h \in C^{\infty}(M)$ with support in a coordinate neighbourhood $U$ containing $x$. This results in
\begin{align*}
\int_M h(x) J_t f(x) \ d x & = \frac 1 t \int_U h(x) \int_0^t \delta(X_s -x) \partial_i f(X_s) \circ d X_s^i \ d x = \frac 1 t \int_0^t h(X_s)  \partial_i f(X_s) \circ d X_s^i.
\end{align*}
Using the relation $Y \circ d Z = Y \ d Z + \half d [Y,Z]$ and~\eqref{eq:ito_equation}, we compute
\begin{align*} 
\int_U h(x) J_t f(x) \ d x & = \frac 1 t \int_0^t h(X_s)  \partial_i f(X_s) \left( \overline b_u^i (X_s)d s + \sigma_{\alpha}^i (X_s) \ d B_s^{\alpha} \right) + \half \frac 1 t \int_0^t \sigma_{\alpha}^i \sigma_{\alpha}^j  \partial_j \left(h \partial_i f \right)(X_s) \ d s \\
& \rightarrow \int_U \left\{ h(x)  (\partial_i f)(x) \left( \overline b_u^i (x)\right)  + \half g^{ij}  \partial_j \left(h \partial_i f \right)(x)\right\} \rho_u \ d x \quad (\mbox{almost surely as} \ t \rightarrow \infty) \\
& = \int_U  h(x) \left\{ \rho_u (\partial_i f) \left( \overline b_u^i \right)  - \half \frac 1 {\sqrt{|g|} } \left(  \partial_i f \right) \partial_j \left( \rho_u \sqrt{|g|} g^{ij}\right) \right\}(x) \ d  x,  \end{align*}
using Proposition~\ref{prop:long_term_density} and the law of large numbers for martingales \cite{NguyenPham1982}.
We find that the long term average vector field $J$ has components
\begin{align*} J^i & = \rho_u \overline b_u^i - \half \frac 1{\sqrt{|g|}} \partial_j \left( \rho_u \sqrt{|g|} g^{ij} \right)  = \rho_u \left( b_u^i + \half \sigma_{\alpha}^k \left( \partial_k \sigma_{\alpha}^i\right) \right) - \half g^{ij} \left(\partial_j \rho_u \right) - \half \rho_u \frac 1{\sqrt{|g|}} \partial_j \left( \sqrt{|g|} g^{ij} \right) \\
& = \rho_u b_u^i - \half g^{ij} \left( \partial_j \rho_u\right),
\end{align*}
where the last equality is a result of the identity
\[ \left(\nabla_{\sigma_{\alpha}} \sigma_{\alpha} \right)^i = \sigma_{\alpha}^k \left(\partial_k \sigma_{\alpha}^i \right) - \frac 1 {\sqrt{|g|}} \partial_j \left( \sqrt{|g|} g^{ij} \right),\]
which may be verified by straightforward calculation. 
%
%

\begin{thebibliography}{CCMT09}

\bibitem[ABG12]{Arapostathis2012}
A~Arapostathis, VS~Borkar, and MK~Ghosh.
\newblock {\em {Ergodic control of diffusion processes}}.
\newblock 2012.

\bibitem[Arn89]{Arnold1989}
V.I.~I Arnol'd.
\newblock {\em {Mathematical Methods of Classical Mechanics (Graduate Texts in
  Mathematics)}}, volume~60 of {\em Graduate Texts in Mathematics}.
\newblock Springer-Verlag, New York, 1989.

\bibitem[BC15]{BaratoChetrite2015}
Andre~C. Barato and Raphael Chetrite.
\newblock {A Formal View on Level 2.5 Large Deviations and Fluctuation
  Relations}.
\newblock {\em Journal of Statistical Physics}, 160(5):1154--1172, May 2015.

\bibitem[BFG15]{Bertini2015}
L.~Bertini, A.~Faggionato, and D.~Gabrielli.
\newblock {Flows, currents, and cycles for Markov Chains: large deviation
  asymptotics}.
\newblock {\em Stochastic Processes and their Applications}, 125(7):38, 2015.

\bibitem[BG88]{BorkarGhosh1988}
Vivek~S Borkar and Mrinal~K Ghosh.
\newblock {Ergodic control of multidimensional diffusions. I. The existence
  results}.
\newblock {\em SIAM J. Control Optim.}, 26(1):112--126, 1988.

\bibitem[BK14]{BierkensKappen2012b}
Joris Bierkens and Hilbert~J. Kappen.
\newblock {Explicit solution of relative entropy weighted control}.
\newblock {\em Systems \& Control Letters}, 72:36--43, May 2014.

\bibitem[Bry90]{Bryc1990}
Wlodzimierz Bryc.
\newblock {Large deviations by the asymptotic value method}.
\newblock In {\em Diffusion processes and related problems in analysis, Vol. I
  (Evanston, IL, 1989)}, volume~22 of {\em Progr. Probab.}, pages 447--472.
  Birkh\"{a}user Boston, Boston, MA, 1990.

\bibitem[CCBK14]{Chernyak2013}
Vladimir~Y. Chernyak, Michael Chertkov, Joris Bierkens, and Hilbert~J. Kappen.
\newblock {Stochastic Optimal Control as Non-equilibrium Statistical Mechanics:
  Calculus of Variations over Density and Current}.
\newblock {\em Journal of Physics A: Mathematical and Theoretical}, 47(2):4,
  June 2014.

\bibitem[CCMT09]{Chernyak2009}
Vladimir~Y. Chernyak, Michael Chertkov, Sergey~V. Malinin, and Razvan
  Teodorescu.
\newblock {Non-Equilibrium Thermodynamics and Topology of Currents}.
\newblock {\em Journal of Statistical Physics}, 137(1):109--147, September
  2009.

\bibitem[dH00]{Hollander2000}
Frank den Hollander.
\newblock {\em {Large deviations}}, volume~14 of {\em Fields Institute
  Monographs}.
\newblock American Mathematical Society, Providence, RI, 2000.

\bibitem[DV75a]{DonskerVaradhan1975-I}
M.~D. Donsker and S.~R.~S. Varadhan.
\newblock {Asymptotic evaluation of certain Markov process expectations for
  large time, I}.
\newblock {\em Communications on Pure and Applied Mathematics}, XXVIII:1--47,
  1975.

\bibitem[DV75b]{DonskerVaradhan1975}
M.~D. Donsker and S.~R.~S. Varadhan.
\newblock {On a Variational Formula for the Principal Eigenvalue for Operators
  with Maximum Principle}.
\newblock {\em Proceedings of the National Academy of Sciences},
  72(3):780--783, March 1975.

\bibitem[DV76]{DonskerVaradhan1976-III}
M.~D. Donsker and S.~R.~S. Varadhan.
\newblock {Asymptotic evaluation of certain Markov process expectations for
  large time, III}.
\newblock {\em Communications on Pure and Applied Mathematics}, 29(4):389--461,
  July 1976.

\bibitem[DZ96]{dapratozabczyk1996}
G~{Da Prato} and J~Zabczyk.
\newblock {\em {Ergodicity for infinite-dimensional systems}}, volume 229 of
  {\em London Mathematical Society Lecture Note Series}.
\newblock Cambridge University Press, Cambridge, 1996.

\bibitem[Eva10]{Evans2010}
Lawrence~C Evans.
\newblock {\em {Partial differential equations}}, volume~19 of {\em Graduate
  Studies in Mathematics}.
\newblock American Mathematical Society, Providence, RI, second edition, 2010.

\bibitem[FK06]{Feng2006}
Jin Feng and Thomas~G. Kurtz.
\newblock {\em {Large Deviations for Stochastic Processes (Mathematical Surveys
  and Monographs)}}.
\newblock American Mathematical Society, 2006.

\bibitem[Fle82]{Fleming1982}
Wendell~H Fleming.
\newblock {Logarithmic transformations and stochastic control}.
\newblock In {\em Advances in filtering and optimal stochastic control
  (Cocoyoc, 1982)}, volume~42 of {\em Lecture Notes in Control and Inform.
  Sci.}, pages 131--141. Springer, Berlin, 1982.

\bibitem[Fle85]{Fleming1985}
Wendell~H Fleming.
\newblock {A stochastic control approach to some large deviations problems}.
\newblock In {\em Recent mathematical methods in dynamic programming (Rome,
  1984)}, volume 1119 of {\em Lecture Notes in Math.}, pages 52--66. Springer,
  Berlin, 1985.

\bibitem[For01]{Fortelle2001}
A~De~La Fortelle.
\newblock {Large Deviation Principle for Markov Chains in Continuous Time}.
\newblock {\em Problems of Information Transmission}, 37(2):120--139, 2001.

\bibitem[FR75]{FlemingRishel1975}
WH~Fleming and RW~Rishel.
\newblock {\em {Deterministic and stochastic optimal control}}.
\newblock 1975.

\bibitem[FS09]{FlemingSoner2009}
Wendell~H. Fleming and Halil~Mete Soner.
\newblock {\em {Controlled Markov Processes and Viscosity Solutions (Stochastic
  Modelling and Applied Probability)}}.
\newblock Springer, 2009.

\bibitem[FSS87]{FlemingSheuSoner1987}
W.H. Fleming, S.J. Sheu, and H.M. Soner.
\newblock {A remark on the large deviations of an ergodic markov process}.
\newblock {\em Stochastics}, 22(3-4):187--199, November 1987.

\bibitem[G\"77]{Gartner1977}
J~G\"{a}rtner.
\newblock {On large deviations from the invariant measure}.
\newblock {\em Theory of Probability \& Its Applications}, pages 24--40, 1977.

\bibitem[GH94]{GriffithsHarris1994}
Phillip Griffiths and Joseph Harris.
\newblock {\em {Principles of algebraic geometry}}.
\newblock Wiley Classics Library. John Wiley \& Sons, Inc., New York, 1994.

\bibitem[Hol78]{Holland1978}
Charles~J. Holland.
\newblock {A minimum principle for the principal eigenvalue for second-order
  linear elliptic equations with natural boundary conditions}.
\newblock {\em Communications on Pure and Applied Mathematics}, 31(4):509--519,
  July 1978.

\bibitem[Hop50]{Hopf1950}
Eberhard Hopf.
\newblock {The partial differential equation u\_t+uu\_x=$\mu$u\_\{xx\}}.
\newblock {\em Comm. Pure Appl. Math.}, 3:201--230, 1950.

\bibitem[IW89]{IkedaWatanabe1989}
Nobuyuki Ikeda and Shinzo Watanabe.
\newblock {\em {Stochastic differential equations and diffusion processes}},
  volume~24 of {\em North-Holland Mathematical Library}.
\newblock North-Holland Publishing Co., Amsterdam, second edition, 1989.

\bibitem[Kac51]{Kac1951}
M~Kac.
\newblock {On some connections between probability theory and differential and
  integral equations}.
\newblock {\em Proceedings of the Second Berkeley Symposium on Mathematical
  Statstics and Probability}, page 189, 1951.

\bibitem[Kap05]{Kappen2005}
Hilbert~J. Kappen.
\newblock {Linear Theory for Control of Nonlinear Stochastic Systems}.
\newblock {\em Physical Review Letters}, 95(20):200201, November 2005.

\bibitem[Kar80]{Karatzas1980}
Ioannis Karatzas.
\newblock {On a stochastic representation for the principal eigenvalue of a
  second-order differential equation}.
\newblock {\em Stochastics}, 3(1-4):305--321, December 1980.

\bibitem[Lue69]{Luenberger1969}
David~G Luenberger.
\newblock {\em {Optimization by vector space methods}}.
\newblock John Wiley \& Sons Inc., New York, 1969.

\bibitem[NP82]{NguyenPham1982}
Hung~T Nguyen and Tuan~D Pham.
\newblock {On the law of large numbers for continuous-time martingales and
  applications to statistics}.
\newblock {\em Stochastica}, 6(1):5--23, 1982.

\bibitem[PZ03]{Polyanin2003}
Andrei~D Polyanin and Valentin~F Zaitsev.
\newblock {\em {Handbook of exact solutions for ordinary differential
  equations}}.
\newblock Chapman \& Hall/CRC, Boca Raton, FL, second edition, 2003.

\bibitem[RBW08]{Rutquist2008}
Per Rutquist, Claes Breitholtz, and Torsten Wik.
\newblock {On the infinite time solution to state-constrained stochastic
  optimal control problems}.
\newblock {\em Automatica}, 44(7):1800--1805, 2008.

\bibitem[Ris89]{Risken1989}
H~Risken.
\newblock {\em {The Fokker-Planck equation: Methods of Solution and
  Applications}}, volume~18 of {\em Springer Series in Synergetics}.
\newblock Springer-Verlag, Berlin, second edition, 1989.

\bibitem[She84]{Sheu1984}
Shuenn-Jyi Sheu.
\newblock {Stochastic control and principal eigenvalue}.
\newblock {\em Stochastics}, 11(3-4):191--211, 1984.

\bibitem[Tay96]{Taylor1996}
Michael~E Taylor.
\newblock {\em {Partial differential equations. I Basic Theory}}, volume 115 of
  {\em Applied Mathematical Sciences}.
\newblock Springer-Verlag, New York, 1996.

\bibitem[Tod06]{Todorov2006}
Emanuel Todorov.
\newblock {Linearly-solvable Markov decision problems}.
\newblock In B~Sch\"{o}lkopf, J~Platt, and T~Hoffman, editors, {\em Advances in
  Neural Information Processing Systems 19}, number~1, pages 1369--1376. MIT
  Press, Cambridge, MA, 2006.

\bibitem[Var84]{Varadhan1984}
S~R~S Varadhan.
\newblock {\em {Large deviations and applications}}, volume~46 of {\em CBMS-NSF
  Regional Conference Series in Applied Mathematics}.
\newblock Society for Industrial and Applied Mathematics (SIAM), Philadelphia,
  PA, 1984.

\bibitem[War83]{Warner1983}
Frank~W Warner.
\newblock {\em {Foundations of differentiable manifolds and Lie groups}},
  volume~94 of {\em Graduate Texts in Mathematics}.
\newblock Springer-Verlag, New York-Berlin, 1983.

\end{thebibliography}

\end{document}